\def \hy #1{\textcolor{black}{#1}}
\newtheorem{Thm}{Theorem}
\newtheorem{Lem}{Lemma}
\newtheorem{Ass}{Assumption}
\newtheorem{Rem}{Remark}
\title{\LARGE \bf  
An Optimistic Gradient Tracking Method for Distributed Minimax Optimization
}
\author{Yan Huang, Jinming Xu, Jiming Chen, and Karl Henrik Johansson 
\thanks{Yan Huang and Karl Henrik Johansson are with the Division of Decision and Control Systems, School of EECS, KTH Royal Institute of Technology, SE-100 44 Stockholm, Sweden. Email: \{yahuang, kallej\}@kth.se}
\thanks{Jinming Xu and Jiming Chen are with the College of Control Science and Engineering, Zhejiang University, Hangzhou 310027, China. Email: \{jimmyxu, cjm\}@zju.edu.cn
}}
\begin{document}

\maketitle

\begin{abstract}
This paper studies the distributed minimax optimization problem over networks. To enhance convergence performance, we propose a distributed optimistic gradient tracking method, termed DOGT, which solves a surrogate function that captures the similarity between local objective functions to approximate a centralized optimistic approach locally. Leveraging a Lyapunov-based analysis, we prove that DOGT achieves linear convergence to the optimal solution for strongly convex-strongly concave objective functions while remaining robust to the heterogeneity among them. Moreover, by integrating an accelerated consensus protocol, the accelerated DOGT (ADOGT) algorithm achieves an optimal convergence rate of  $\mathcal{O} \left( \kappa \log \left( \epsilon ^{-1} \right) \right) $ and communication complexity of $\mathcal{O} \left( \kappa \log \left( \epsilon ^{-1} \right) /\sqrt{1-\sqrt{\rho _W}} \right)$ for a suboptimality level of $\epsilon>0$, where $\kappa$ is the condition number of the objective function and $\rho_W$ is the spectrum gap of the network. Numerical experiments illustrate the effectiveness of the proposed algorithms.
\end{abstract}

\section{Introduction}

Minimax optimization has gained significant attention over the past decade due to its broad applications in robust optimization \cite{mohri2019agnostic, sinha2017certifying}, game theory \cite{vamvoudakis2023game}, and generative adversarial networks (GANs) \cite{goodfellow2014generative, gulrajani2017improved}, among others.
With the increasing scale of data and devices \cite{konevcny2016federated}, distributed optimization has emerged as a key approach in large-scale optimization \cite{nedic2018distributed}, machine learning \cite{lian2017can}, and control \cite{molzahn2017survey}.

In this paper, we consider the distributed minimax optimization problem jointly solved by a network of $n$ nodes:
\begin{equation}\label{Prob_minimax}
\underset{x\in \mathbb{R} ^p}{\min}\,\,\underset{y\in \mathbb{R} ^d}{\max}f\left( x,y \right) :=\frac{1}{n}\sum_{i=1}^n{f_i\left( x,y \right)},
\end{equation}
where $f_i$, $i=1,\dots,n$, is the local objective function, $x\in \mathbb{R} ^p$ and $y\in \mathbb{R} ^d$ are primal and dual variables to be minimized and maximized, respectively.
\hy{This formulation is general, as it recovers the traditional distributed minimization problem \cite{nedic2018distributed} as a special case, thereby capturing a broader class of applications. For instance, in distributed training of Wasserstein GANs \cite{liu2020decentralized, huang2024achieving}, nodes collaborate to minimize the generator while simultaneously maximizing the discriminator, thus enabling data-parallel accelerated training.}
A straightforward approach to solving this problem is to apply centralized minimax algorithms, such as gradient descent ascent (GDA) \cite{dem1972numerical}, optimistic gradient descent ascent (OGDA) \cite{daskalakis2017training} and extragradient (EG) \cite{korpelevich1976extragradient}, in combination with the averaging consensus protocol \cite{xiao2006distributed}. 
For instance, Deng and Mahdavi \cite{deng2021local} proposed Local SGDA, which integrates FedAvg from federated learning \cite{mcmahan2017communication} with stochastic GDA to reduce communication costs. Chen et al. \cite{chen2024efficient} introduced a variance-reduction-based decentralized GDA method, achieving improved sample complexity under the stronger assumption of average smoothness for each sample. However, as shown in \cite{mokhtari2020unified}, centralized GDA methods do not achieve linear convergence for bilinear functions. We will show later that this limitation also applies to the distributed GDA (DGDA) algorithm.

OGDA and EG methods have gained popularity in recent literature due to their ability to achieve linear convergence rates for strongly convex-strongly concave and bilinear objective functions \cite{mokhtari2020unified, azizian2020tight}, as well as their effectiveness in training GANs \cite{daskalakis2017training, liang2019interaction}.
In decentralized settings, Liu et al. \cite{liu2020decentralized} proposed a distributed optimistic gradient descent ascent method, DPOSG, for training large-scale GANs and analyzed its convergence for nonconvex-nonconcave (NC-NC) objective functions.
Beznosikov et al. \cite{beznosikov2020distributed} introduced an EG-based FedAvg method and proved that with an accelerated consensus protocol, it achieves a near-optimal convergence rate, matching the lower bound for (strongly) convex-(strongly) concave objective functions up to logarithmic factors.
Liu et al. \cite{liu2019decentralized} developed a decentralized proximal point method for NC-NC problems, which requires solving a subproblem at each iteration.
However, the above methods establish convergence under assumptions such as uniformly bounded gradient norms \cite{liu2020decentralized} or bounded feasible domains \cite{liu2019decentralized, beznosikov2020distributed}. 
Otherwise, a decaying stepsize is required to remove a steady-state error \cite{cai2024diffusion}.
These assumptions restrict the applicability of their theoretical results in many real-world tasks due to their inability to handle the heterogeneity of local objective functions \cite{lian2017can}, i.e., the gradients of each $f_i$ differ, leading to distinct local optimas. This heterogeneity is recognized as a fundamental challenge in distributed optimization.

Gradient tracking (GT) methods \cite{xu2015augmented, pu2021distributed} are widely used in distributed minimization to mitigate the effects of data/function heterogeneity. Recent studies have explored their application to minimax optimization. For instance, Wai et al. \cite{wai2018multi} introduced a GDA-based GT method for multi-agent reinforcement learning and established a linear convergence rate for SC-SC settings.
Mukherjee et al. \cite{mukherjee2020decentralized} combined GT with EG methods, achieving linear convergence for strongly convex-strongly concave (SC-SC) objectives without assuming a bounded gradient norm or feasible domain. However, there remains a gap between the convergence rates in \cite{wai2018multi, mukherjee2020decentralized} and the established lower bound in \cite{beznosikov2020distributed}.
Another line of research utilizes second-order similarity based on the mirror-descent method \cite{sonata, beznosikov2021distributed, zhou2024near}, where the difference between the second-order derivative matrices of $f_i$ is bounded by a finite quantity $\delta$. This approach improves algorithmic efficiency when $\delta$ is smaller than the smoothness condition number. However, these methods often incur high computational costs due to the need to solve a subproblem.

To address these challenges, inspired by the mirror-descent-type method in decentralized settings \cite{sonata, beznosikov2021distributed}, we propose a distributed optimistic gradient tracking method, termed DOGT, for solving Problem \eqref{Prob_minimax}. This algorithm solves a surrogate objective function that captures the similarity between the local gradient and an estimator of the global gradient, using one-step approximate solutions to avoid the need for inner loops to solve subproblems. Theoretically, we prove that DOGT achieves a linear convergence rate and outperforms the result in \cite{mukherjee2020decentralized} by a factor of $\mathcal{O} \left( \kappa ^{1/3} \right) $ when the graph connectivity is strong. Furthermore, by integrating an accelerated consensus protocol with prior knowledge of the graph, we propose ADOGT and prove that it achieves an optimal convergence rate of $\mathcal{O} \left( \kappa \log \left( \epsilon ^{-1} \right) \right)$ and communication complexity of $\mathcal{O} \left( \kappa \log \left( \epsilon ^{-1} \right) /\sqrt{1-\sqrt{\rho _W}} \right)$, matching the lower bound established in \cite{beznosikov2020distributed} for deterministic settings under the assumptions considered in this work. Numerical results validate our theoretical findings.

\textbf{Paper organization.} The rest of the paper is structured as follows: Section \ref{Sec_prob_algorithm} formulates the distributed minimax optimization problem with several common assumptions and presents the algorithm design of DOGT and ADOGT. Section \ref{Sec_convergence} provides the main convergence results of the proposed algorithms. Section \ref{Sec_experiment} presents numerical experiments to validate our theoretical findings. Finally, Section \ref{Sec_conclusion} concludes the paper, and several supporting lemmas for the proof of the main results are provided in the appendix.

\textbf{Notations.} Throughout this paper, we adopt the following notations: $\left< \cdot ,\cdot \right> $ is the inner product of vectors, $\left\| \cdot \right\|$ represents the Frobenius norm, $\lceil \cdot \rceil$ indicates the ceiling operation, $\mathbf{1}$ represents the all-ones vector, $\mathbf{I}$ denotes the identity matrix, and $\mathbf{J}=\mathbf{1}\mathbf{1}^{\top}/n$ denotes the averaging matrix.

\section{Problem Formulation and Algorithm Design}\label{Sec_prob_algorithm}

\subsection{Distributed Minimax Optimization Problem}
To solve the distributed minimax optimization problem \eqref{Prob_minimax} in a decentralized manner, we consider each node $i\in[n]$ maintaining a local copy of the global primal and dual variables, i.e., $x_i\in \mathbb{R} ^p, y_i\in \mathbb{R} ^d$, and optimize the following constrained problem, which has the same optimal solution as the original problem \eqref{Prob_minimax}: 
\begin{equation}\label{Prob_minimax_constained}
\begin{aligned}
&\underset{\mathbf{x}\in \mathbb{R} ^{n\times p}}{\min}\,\,\underset{\mathbf{y}\in \mathbb{R} ^{n\times d}}{\max}F\left( \mathbf{x},\mathbf{y} \right) :=\frac{1}{n}\sum_{i=1}^n{f_i\left( x_i,y_i \right) ,}
\\
&\text{s.t.} \quad x_i=x_j,\,\, y_i=y_j, \,\, \forall i, j=1,\dots ,n
\end{aligned}
\end{equation}
where 
\begin{equation*}
\begin{aligned}
\mathbf{x}&:=\left[ x_1,x_2,\dots ,x_n \right] ^{\top}\in \mathbb{R} ^{n\times p},
\\
\mathbf{y}&:=\left[ y_1,y_2,\dots ,y_n \right] ^{\top}\in \mathbb{R} ^{n\times d}
\end{aligned}
\end{equation*}
are the collections of the primal and dual variables, respectively, which are required to reach consensus.
Moreover, the nodes are connected over a decentralized network, and its topology is modeled as an undirected graph $\mathcal{G}=(\mathcal{V},\mathcal{E})$, where $\mathcal{V}=\{1,2,...,n\}$ denotes the set of agents, and $\mathcal{E}\subseteq\mathcal{V}\times \mathcal{V}$ denotes the set of edges consisting of ordered pairs $(i,j)$ modeling the communication link from $j$ to $i$. Each node $i$ only communicates with its neighbor $\mathcal{N}_i=\left\{ j\,|\,j\ne i,\, \left( i, j \right) \in \mathcal{E} \right\} $, including $i$ itself.
Then, we make the following commonly used assumptions on the differentiable objective function $f_i$, its gradient $\nabla f_i$, and the graph $\mathcal{G}$.

\begin{Ass}[Convexity and Concavity]\label{Ass_conv_conc}
Each objective function $f_i\left( x, y \right) $ is $\mu$-strongly convex in $x$ and $\mu$-strongly concave in $y$, i.e., $\forall x, x^{\prime}\in \mathbb{R}^p$, $\forall y, y^{\prime}\in \mathbb{R}^d$ and $\mu>0$,
\begin{equation*}
\begin{aligned}
&f_i\left( x^{\prime},y \right) -f_i\left( x,y \right) 
\\
&\geqslant \left< \nabla _xf_i\left( x,y \right) ,x^{\prime}-x \right> +\frac{\mu}{2}\left\| x-x^{\prime} \right\| ^2,
\\
&f_i\left( x,y^{\prime} \right) -f_i\left( x,y \right) 
\\
&\leqslant \left< \nabla _yf_i\left( x,y \right) ,y^{\prime}-y \right> -\frac{\mu}{2}\left\| y-y^{\prime} \right\| ^2.
\end{aligned}
\end{equation*}
\end{Ass}

\begin{Ass}[Smoothness]\label{Ass_smoothness}
Each objective function $f_i\left( x, y \right) $ is jointly $L$-smooth in $x$ and $y$, i.e., $\forall x, x^{\prime} \in \mathbb{R}^p$ and $\forall y, y^{\prime} \in \mathbb{R}^d$, there exists a constant $L>0$ such that for $z\in \left\{ x,y \right\}$,
\begin{equation*}
\begin{aligned}
&\left\| \nabla_z f_i\left( x,y \right) -\nabla_z f_i\left( x^{\prime},y^{\prime} \right) \right\| ^2
\\
&\leqslant L^2\left( \left\| x-x^{\prime} \right\| ^2+\left\| y-y^{\prime} \right\| ^2 \right).
\end{aligned}
\end{equation*}
\end{Ass}

\begin{Ass}[Graph connectivity] \label{Ass_graph}
The weight matrix $W=\left[ w_{i,j} \right] _{i,j=1}^{n}$ induced by graph $\mathcal{G}$ is doubly stochastic, i.e., $W\mathbf{1}=\mathbf{1},\mathbf{1}^{\top}W=\mathbf{1}^{\top}$ and $\rho _W:= \left\| W-\mathbf{J} \right\|_2 ^2 <1$.
\end{Ass}

\subsection{Algorithm Design}

Motivated by the SONATA algorithm \cite{sonata} originally designed for minimization problems, we propose a distributed optimistic gradient tracking (DOGT) algorithm for solving the distributed minimax problem \eqref{Prob_minimax_constained}. In particular, we consider the following surrogate function for minimizing on $x_i\in\mathbb{R}^p$ and maximizing on $y_i\in\mathbb{R}^d$:
\begin{equation}\label{Prob_surrogate}
\begin{aligned}
f_i\left( x_i,y_i \right) &+\frac{1}{2\gamma}\left\| x_i-x_{i,k} \right\| ^2+\frac{1}{2\gamma}\left\| y_i-y_{i,k} \right\| ^2
\\
&+\underset{\mathrm{similarity \,\, in \,\,} x}{\underbrace{\left( p_{i,k}-\nabla _xf_i\left( x_{i,k},y_{i,k} \right) \right) }}^{\top}\left( x_i-x_{i,k} \right) 
\\
&+\underset{\mathrm{similarity \,\, in \,\,} y}{\underbrace{\left( q_{i,k}-\nabla _yf_i\left( x_{i,k},y_{i,k} \right) \right) }}^{\top}\left( y_i-y_{i,k} \right),
\end{aligned}
\end{equation}
where $p_{i,k}$ and $q_{i,k}$ are the gradient tracking variables used for estimating the global gradient at iteration $k$ with respect to $x$ and $y$, receptively. This surrogate function captures the difference between the local and global gradients. Solving it in a decentralized manner helps to asymptotically reduce this gap and approximate a centralized optimistic method, thereby improving the convergence \cite{sonata}.
Specifically, according to the first-order optimality condition of \eqref{Prob_surrogate}, and taking the primal variable $x$ as an example, we derive the following proximal point method:
\begin{equation}
\begin{aligned}
x_{i,k+1}=x_{i,k}&-\gamma \nabla _xf_i\left( x_{i,k+1}, y_{i,k+1} \right) 
\\
&-\gamma \left( p_{i,k}-\nabla_x f_i\left( x_{i,k},y_{i,k} \right) \right).
\end{aligned}
\end{equation}
To obtain an algorithm that is easy to deploy without requiring the exact solution of the surrogate function \eqref{Prob_surrogate}, we use the approximation 
\begin{equation}
\begin{aligned}
&\nabla _xf_i\left( x_{i,k+1},y_{i,k+1} \right) 
\\
&\approx 2\nabla _xf_i\left( x_{i,k},y_{i,k} \right) -\nabla _xf_i\left( x_{i,k-1},y_{i,k-1} \right),
\end{aligned}
\end{equation}
which has an error bounded by $o\left( \gamma^2 \right)$ \cite{mokhtari2020unified}. Based on this, we propose the DOGT algorithm, an optimistic gradient descent ascent method with gradient tracking. The pseudo-code for DOGT is provided in Algorithm \ref{Alg_DOST}. \hy{Notably, DOGT adopts a single-loop structure, making it more suitable for practical deployment than mirror-descent-based methods \cite{sonata, beznosikov2021distributed}, particularly in scenarios with data heterogeneity.}

\begin{algorithm}
\caption{\textbf{Distributed Optimistic Gradient Tracking (DOGT)}}
\begin{algorithmic}[1]\label{Alg_DOST}
\renewcommand{\algorithmicrequire}{\textbf{Initialization: }}
\REQUIRE {Initial points $x_{i,0}\in \mathbb{R}^p$, $y_{i,0}\in \mathbb{R}^d$, initial gradient tracking variables $\nabla _xf_{i,-1}=p_{i,0}=\nabla _xf_i\left( x_{i,0},y_{i,0} \right) $, $\nabla _yf_{i,-1}=q_{i,0}=\nabla _yf_i\left( x_{i,0},y_{i,0} \right) $, and stepsize $\gamma >0$.
}
\
\FOR{iteration $k = 0,1,\dots$, each node $i\in[n]$,}

\STATE Optimistic gradient descent ascent with gradient tracking variables:
\begin{equation*}
\begin{aligned}
x_{i,k+1}&=x_{i,k}-\gamma \left( {p_{i,k}}+\nabla _xf_{i,k}-\nabla _xf_{i,k-1} \right) ,
\\
y_{i,k+1}&=y_{i,k}+\gamma \left( {q_{i,k}}+\nabla _yf_{i,k}-\nabla _yf_{i,k-1} \right) .
\end{aligned}
\end{equation*}

\STATE {Compute gradient $\nabla _xf_{i,k+1}$ and $\nabla _yf_{i,k+1}$}.

\STATE Update gradient tracking variables:
\begin{equation*}
\begin{aligned}
p_{i,k+1}&={p_{i,k}}+\nabla _xf_{i,k+1}-\nabla _xf_{i,k},
\\
q_{i,{k+1}}&=q_{i,k}+\nabla _yf_{i,k+1}-\nabla _yf_{i,k}.
\end{aligned}
\end{equation*}

\STATE Inter-node communication for packaged message $\theta _{i,k+1}:=\left\{ x_{i,k+1}, y_{i,k+1}, p_{i,k+1}, q_{i,k+1} \right\} $:
\begin{equation*}
\theta _{i,k+1}\gets \sum_{j\in \mathcal{N} _i}{w_{i,j}\theta _{i,k+1}}.
\end{equation*}
\ENDFOR
\end{algorithmic}
\end{algorithm}

For brevity, we introduce the following notations for the gradients of all nodes at iteration $k$:
\begin{equation*}
\begin{aligned}
\nabla _xF_k&:=\left[ \dots ,\nabla _xf_i\left( x_{i,k},y_{i,k} \right) ,\dots \right] ^{\top} \in \mathbb{R} ^{n\times p},
\\
\nabla _yF_k&:=\left[ \dots ,\nabla _yf_i\left( x_{i,k},y_{i,k} \right) ,\dots \right] ^{\top} \in \mathbb{R} ^{n\times d}.
\end{aligned}
\end{equation*}
Then, the DOGT algorithm can be rewritten in the following compact form:
\begin{equation}\label{Eq_DOGT}
\begin{aligned}
\mathbf{x}_{k+1}&=W\left( \mathbf{x}_k-\gamma \left( \mathbf{p}_k+\nabla _xF_k-\nabla _xF_{k-1} \right) \right),
\\
\mathbf{y}_{k+1}&=W\left( \mathbf{y}_k+\gamma \left( \mathbf{q}_k+\nabla _yF_k-\nabla _yF_{k-1} \right) \right),
\\
\mathbf{p}_{k+1}&=W\left( \mathbf{p}_k+\nabla _xF_{k+1}-\nabla _xF_k \right) ,
\\
\mathbf{q}_{k+1}&=W\left( \mathbf{q}_k+\nabla _yF_{k+1}-\nabla _yF_k \right) .
\end{aligned}
\end{equation}
where the collection of the gradient tracking variables for the primal and dual decision variables are denoted as follows:
\begin{equation*}
\begin{aligned}
\mathbf{p}_k&:=\left[ p_{1,k},p_{2,k},\dots ,p_{n,k} \right] ^{\top}\in \mathbb{R} ^{n\times p},
\\
\mathbf{q}_k&:=\left[ q_{1,k},q_{2,k},\dots ,q_{n,k} \right] ^{\top}\in \mathbb{R} ^{n\times d}.
\end{aligned}
\end{equation*}

Furthermore, taking the average over all nodes on both sides of the algorithm yields the following equations:
\begin{equation}
\begin{aligned}
\bar{x}_{k+1}&:=\frac{\mathbf{1}^{\top}}{n}\mathbf{x}_{k+1}=\bar{x}_k-\gamma \frac{\mathbf{1}^{\top}}{n}\left( 2\nabla _xF_k-\nabla _xF_{k-1} \right) ,
\\
\bar{y}_{k+1}&:=\frac{\mathbf{1}^{\top}}{n}\mathbf{y}_{k+1}=\bar{y}_k+\gamma \frac{\mathbf{1}^{\top}}{n}\left( 2\nabla _yF_k-\nabla _yF_{k-1} \right),
\\
\bar{p}_{k+1}&:=\frac{\mathbf{1}^{\top}}{n}\mathbf{p}_{k+1}=\frac{\mathbf{1}^{\top}}{n}\nabla _xF_{k+1},
\\
\bar{q}_{k+1}&:=\frac{\mathbf{1}^{\top}}{n}\mathbf{q}_{k+1}=\frac{\mathbf{1}^{\top}}{n}\nabla _yF_{k+1}.
\end{aligned}
\end{equation}
It can be observed that, on average, DOGT employs a centralized OGDA update scheme, with the gradient tracking variables asymptotically aligning with the average gradient across all nodes. The consensus of these variables is ensured by the weighting matrix $W$, which satisfies Assumption \ref{Ass_graph}.

To further enhance the convergence of DOGT, we incorporate the accelerated gossip consensus protocol \cite{liu2011accelerated} and obtained ADOGT, leveraging prior knowledge of the spectrum gap $\rho_W$. Specifically, we replace the communication step of DOGT (cf. line 5 in Algorithm \ref{Alg_DOST}) with the following update, executed a finite number of $T$ times at each iteration:
\begin{equation}\label{Eq_acc_consensus}
\theta _{i,k+1}\gets \left( 1+\eta \right) \sum_{j\in \mathcal{N} _i}{w_{i,j}\theta _{j,k+1}}-\eta \theta _{i,k+1},
\end{equation}
where $\eta =\left( 1-\sqrt{1-\rho _W} \right) /\left( 1+\sqrt{1-\rho _W} \right)$. Then, we obtain a generated weight matrix $M_T$ of ADOGT as follows:
\begin{equation}\label{Eq_weight_matrix_M}
M_{t+1}=\left( 1+\eta \right) WM_t-\eta M_{t-1},
\end{equation}
where $t=0,1,\dots,T-1$, and $M_{-1}=M_0=\mathbf{I}$. We will show that ADOGT achieves the optimal convergence rate with this accelerated weight matrix (cf., Theorem \ref{Thm_acc_DOGT}).

\section{Convergence Analysis}\label{Sec_convergence}

In this section, we provide the convergence analysis of DOGT and ADOGT for SC-SC smooth objective functions. For simplicity, we further denote
\begin{equation*}
\begin{aligned}
\mathbf{z}_k:=\left[ \mathbf{x}_k, \mathbf{y}_k \right] ,\,
\mathbf{r}_k:=\left[ \mathbf{p}_k, -\mathbf{q}_k \right],\,
\nabla F_k:=\left[ \nabla _xF_k, -\nabla _yF_k \right].
\end{aligned}
\end{equation*}
Then, letting $\boldsymbol{\varepsilon }_k=\nabla F_{k+1}-\nabla F_k-\nabla F_k+\nabla F_{k-1}$, we get the update rule of $\bar{z}_k:=\mathbf{1}^{\top}\mathbf{z}_k/n$ as follows:
\begin{equation}
\bar{z}_{k+1}=\bar{z}_k-\gamma \frac{\mathbf{1}^{\top}}{n}\nabla F_{k+1} +\gamma \frac{\mathbf{1}^{\top}}{n}\boldsymbol{\varepsilon }_k,
\end{equation}
and
\begin{equation}\label{Eq_update_z_ave}
\begin{aligned}
&\bar{z}_{k+1}-\gamma \frac{\mathbf{1}^{\top}}{n}\left( \nabla F_{k+1}-\nabla F_k \right) 
\\
&=\bar{z}_k-\gamma \frac{\mathbf{1}^{\top}}{n}\left( \nabla F_k-\nabla F_{k-1} \right) -\gamma \frac{\mathbf{1}^{\top}}{n}\nabla F_{k+1}.
\end{aligned}
\end{equation}

\subsection{Linear Convergence}

To prove the convergence of DOGT, we define the following Lyapunov function:
\begin{equation}
\begin{aligned}
\varPsi _k&:=\left\| \varXi _k \right\| ^2+\frac{\gamma L}{n}\left\| \mathbf{z}_k-\mathbf{z}_{k-1} \right\| ^2
\\
&\quad+c_1\left\| \mathbf{z}_k-\mathbf{1}\bar{z}_k \right\| ^2+c_2\left\| \mathbf{r}_k-\mathbf{1}\bar{r}_k \right\| ^2,
\end{aligned}
\end{equation}
where $c_1$ and $c_2$ are coefficients to be designed, and the optimality gap to the optimal solution $z^*:=\left[ x^*,y^* \right]$ of problem \eqref{Prob_minimax_constained} is defined as
\begin{equation}\label{Eq_opt_gap}
\varXi _k:=\bar{z}_k-\gamma \frac{\mathbf{1}^{\top}}{n}\left( \nabla F_k-\nabla F_{k-1} \right) -z^*.
\end{equation}

Then, with the help of Lemmas \ref{Lem_z_iter_gap}-\ref{Lem_opt} in the appendix, the following theorem shows a linear convergence rate of DOGT.

\begin{Thm}\label{Thm_DOGT}
{Consider the DOGT algorithm as depicted in~\eqref{Eq_DOGT}.} Suppose Assumptions \ref{Ass_conv_conc}-\ref{Ass_graph} hold. Let the stepsize 
\begin{equation}\label{ss_theorem}
\gamma \leqslant \min \left\{ \frac{1}{64L}, \frac{\left( 1-\rho _W \right) ^2}{144L\sqrt{\rho _W}} \right\}.
\end{equation}
Then, we have for all $k \geqslant 0$,
\begin{equation}
\begin{aligned}
\varPsi _k\leqslant \left( 1-\min \left\{ \frac{3\gamma \mu}{4}, \frac{1-\rho _W}{8} \right\} \right) ^k\varPsi _0.
\end{aligned}
\end{equation}
\end{Thm}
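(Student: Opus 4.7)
The plan is to establish the contraction $\Psi_{k+1} \leq (1-\alpha)\Psi_k$ for $\alpha := \min\{3\gamma\mu/4,\,(1-\rho_W)/8\}$ by deriving a one-step recursion for each of the four terms in $\Psi_k$ and then aggregating them with the coefficients $c_1,c_2$ chosen large enough so that cross-coupling terms are absorbed. The rewriting \eqref{Eq_update_z_ave} is the key structural identity: it shows that the shifted average $\bar{z}_{k+1} - \gamma \mathbf{1}^\top (\nabla F_{k+1}-\nabla F_k)/n$ evolves via a virtual gradient step using the new gradient $\nabla F_{k+1}$, so the quantity $\Xi_k$ behaves like the iterate of a centralized proximal/extragradient-style scheme. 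This is precisely the reason for putting the correction term into the definition \eqref{Eq_opt_gap}.

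First, I will bound $\|\Xi_{k+1}\|^2$. Using \eqref{Eq_update_z_ave}, I expand $\|\Xi_{k+1}\|^2 = \|\Xi_k - \gamma \mathbf{1}^\top \nabla F_{k+1}/n - \gamma\mathbf{1}^\top(\nabla F_k-\nabla F_{k-1})/n\|^2$ appearing after the shift, and then replace $\mathbf{1}^\top \nabla F_{k+1}/n$ with the full-batch saddle operator evaluated at $\bar{z}_{k+1}$ plus a smoothness error controlled by $\|\mathbf{z}_{k+1}-\mathbf{1}\bar{z}_{k+1}\|$. The strong convex-strong concave Assumption~\ref{Ass_conv_conc} together with Assumption~\ref{Ass_smoothness} yields $\mu$-strong monotonicity and $L$-Lipschitzness of the operator, which give a contraction factor of $1-\gamma\mu$ (up to lower-order terms) on the principal quadratic, with residual terms of the form $\gamma^2 L^2 \|\mathbf{z}_{k+1}-\mathbf{1}\bar{z}_{k+1}\|^2$ and $\gamma^2\|\mathbf{1}^\top(\nabla F_k-\nabla F_{k-1})/n\|^2 \leq \gamma^2 L^2 \|\mathbf{z}_k-\mathbf{z}_{k-1}\|^2/n$. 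This is the role that Lemma~\ref{Lem_opt} plays, and why the auxiliary term $(\gamma L/n)\|\mathbf{z}_k-\mathbf{z}_{k-1}\|^2$ is included in $\Psi_k$: it absorbs the optimistic correction. A companion bound on $\|\mathbf{z}_{k+1}-\mathbf{z}_k\|^2$ (the content of Lemma~\ref{Lem_z_iter_gap}) is needed to close the recursion on that term.

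Next, I will handle the two consensus errors. Because of the doubly stochastic $W$ and the identity $W-\mathbf{J} = (W-\mathbf{J})(I-\mathbf{J})$, the standard inequality $\|W\mathbf{u}-\mathbf{1}\bar{u}\|^2 \leq \rho_W\|\mathbf{u}-\mathbf{1}\bar{u}\|^2$ together with Young's inequality will give
\begin{equation*}
\|\mathbf{z}_{k+1}-\mathbf{1}\bar{z}_{k+1}\|^2 \leq \tfrac{1+\rho_W}{2}\|\mathbf{z}_k-\mathbf{1}\bar{z}_k\|^2 + C_1\gamma^2\|\mathbf{r}_k-\mathbf{1}\bar{r}_k\|^2 + C_1\gamma^2\|\nabla F_k-\nabla F_{k-1}\|^2,
\end{equation*}
and an analogous recursion for $\|\mathbf{r}_{k+1}-\mathbf{1}\bar{r}_{k+1}\|^2$ in which the driving term is $\|\nabla F_{k+1}-\nabla F_k\|^2 \leq L^2\|\mathbf{z}_{k+1}-\mathbf{z}_k\|^2$ (by Assumption~\ref{Ass_smoothness}). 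In each inequality I lose a factor $(1-\rho_W)/2$ per communication; this is what produces the second candidate rate $(1-\rho_W)/8$ in the statement.

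Finally, I will combine the four bounds into a single inequality of the form $\Psi_{k+1} \leq \Psi_k - \gamma\mu\,\|\Xi_k\|^2 - \tfrac{1-\rho_W}{2}c_1\|\mathbf{z}_k-\mathbf{1}\bar{z}_k\|^2 - \tfrac{1-\rho_W}{2}c_2\|\mathbf{r}_k-\mathbf{1}\bar{r}_k\|^2 + (\text{cross terms})$, and pick $c_1,c_2$ large enough—specifically $c_2 \gtrsim \gamma^2 L^2/(1-\rho_W)$ and $c_1 \gtrsim c_2 L^2/(1-\rho_W)$—so that all cross terms of order $\gamma^2$ and $\gamma^2 L^2$ are dominated. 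The stepsize restriction \eqref{ss_theorem} then ensures each of the negative terms dominates a fixed fraction of the corresponding component of $\Psi_k$, giving the uniform decay rate $1-\alpha$. The main technical obstacle I anticipate is the third step: the consensus-error recursion for $\mathbf{r}_k$ is driven by $\|\mathbf{z}_{k+1}-\mathbf{z}_k\|^2$, which itself depends on $\|\mathbf{r}_k-\mathbf{1}\bar{r}_k\|^2$ and on past consensus errors through the optimistic term $\nabla F_k-\nabla F_{k-1}$, so closing the loop demands a careful ordering of coefficient choices, and this is precisely where the factor $\sqrt{\rho_W}$ enters the stepsize bound.
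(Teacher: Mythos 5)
Your proposal follows essentially the same route as the paper: a component-wise one-step recursion for each of the four terms of $\varPsi_k$ (optimality gap via the shifted-average identity \eqref{Eq_update_z_ave}, iterate difference, consensus error, and tracking error, i.e., Lemmas \ref{Lem_opt}, \ref{Lem_z_iter_gap}, \ref{Lem_cons}, \ref{Lem_track}), aggregated with coefficients $c_1, c_2$ chosen so that the cross-coupling terms are absorbed under the stepsize restriction \eqref{ss_theorem}. You also correctly identify the coupling loop between the tracking error and $\left\| \mathbf{z}_{k+1}-\mathbf{z}_k \right\|^2$ as the source of the $\sqrt{\rho_W}$ factor in the stepsize bound, which matches the paper's choice $c_1=72\gamma L/(n(1-\rho_W))$, $c_2=4608\gamma^3 L/(n(1-\rho_W)^3)$.
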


\begin{proof}
By Lemma \ref{Lem_z_iter_gap} and Lemma \ref{Lem_opt}, we can obtain that
\begin{equation}
\begin{aligned}
&\left\| \varXi _{k+1} \right\| ^2+\frac{\gamma L}{n}\left\| \mathbf{z}_{k+1}-\mathbf{z}_k \right\| ^2
\\
&\leqslant \left( 1-\frac{3\gamma \mu}{4} \right) \left\| \varXi _k \right\| ^2+\frac{4\gamma ^3L^3}{n}\left\| \mathbf{z}_k-\mathbf{z}_{k-1} \right\| ^2
\\
&\quad +\frac{9\gamma L}{n}\left\| \mathbf{z}_k-\mathbf{1}\bar{z}_k \right\| ^2+\frac{9\gamma ^3L}{n\left( 1-\rho _W \right)}\left\| \mathbf{r}_k-\mathbf{1}\bar{r}_k \right\| ^2
\\
&\quad -\left( \frac{\gamma ^2}{4n}-\frac{8\gamma ^3L}{n} \right) \left\| \nabla F\left( \mathbf{1}\bar{z}_k \right) \right\| ^2.
\end{aligned}
\end{equation}
Letting $\gamma\leqslant 1/(8L)$ such that
\[
1-\frac{3\gamma \mu}{4}\geqslant 1-\frac{3\mu}{32L}\geqslant 4\gamma ^2L^2+4\gamma L,
\]
we can obtain 
\begin{equation}
\begin{aligned}
&\left\| \varXi _{k+1} \right\| ^2+\frac{\gamma L}{n}\left\| \mathbf{z}_{k+1}-\mathbf{z}_k \right\| ^2
\\
&\leqslant \left( 1-\frac{3\gamma \mu}{4} \right) \left( \left\| \varXi _k \right\| ^2+\left\| \mathbf{z}_k-\mathbf{z}_{k-1} \right\| ^2 \right) 
\\
&\quad+\frac{9\gamma L}{n}\left\| \mathbf{z}_k-\mathbf{1}\bar{z}_k \right\| ^2+\frac{9\gamma ^3L}{n\left( 1-\rho _W \right)}\left\| \mathbf{r}_k-\mathbf{1}\bar{r}_k \right\| ^2
\\
&\quad-\frac{4\gamma L}{n}\left\| \mathbf{z}_k-\mathbf{z}_{k-1} \right\| ^2-\left( \frac{\gamma ^2}{4n}-\frac{8\gamma ^3L}{n} \right) \left\| \nabla F\left( \mathbf{1}\bar{z}_k \right) \right\| ^2.
\end{aligned}
\end{equation}

Then, combining Lemma \ref{Lem_cons} and \ref{Lem_track} , we can obtain the contraction of the Lyapunov function:
\begin{equation}
\begin{aligned}
&\varPsi _{k+1}
\\
&\leqslant \left( 1-\min \left\{ \frac{3\gamma \mu}{4}, \frac{1-\rho _W}{8} \right\} \right) \varPsi _{k}
\\
&\quad+\left( \frac{8\gamma ^2L^4\rho _W}{1-\rho _W}c_2+\frac{4\gamma ^2\rho _WL^2}{1-\rho _W}c_1-\frac{4\gamma ^2L^2}{n} \right) \left\| \mathbf{z}_k-\mathbf{z}_{k-1} \right\| ^2
\\
&\quad+\left( \frac{9L^2\rho _W}{1-\rho _W}c_2+\frac{9\gamma L}{n}-\frac{1-\rho _W}{4}c_1 \right) \left\| \mathbf{z}_k-\mathbf{1}\bar{z}_k \right\| ^2
\\
&\quad+\left( \frac{4\gamma ^2\rho _W}{1-\rho _W}c_1+\frac{9\gamma ^3L}{n\left( 1-\rho _W \right)}-\frac{1-\rho _W}{8}c_2 \right) \left\| \mathbf{r}_k-\mathbf{1}\bar{r}_k \right\| ^2
\\
&\quad-\left( \frac{\gamma ^2}{4n}-\frac{8\gamma ^3L}{n}-c_2\frac{16\gamma ^2L^2\rho _W}{1-\rho _W} \right) \left\| \nabla F\left( \mathbf{1}\bar{z}_k \right) \right\| ^2.
\end{aligned}
\end{equation}
Set the parameters of the Lyapunov function as follows:
\begin{equation}
c_1=\frac{72\gamma L}{n\left( 1-\rho _W \right)},  \quad
c_2=\frac{4608\gamma ^3L}{n\left( 1-\rho _W \right) ^3}.
\end{equation}
Then, letting the stepsize further satisfy \eqref{ss_theorem} such that the coefficients of the last four terms on the right-hand side of the inequality are all less than or equal to 0, we complete the proof.
\end{proof}

\begin{Rem}
Theorem \ref{Thm_DOGT} shows that DOGT converges to the optimal solution of problem \eqref{Prob_minimax_constained} at a linear rate. And, by the upper bound of the stepsize \eqref{ss_theorem}, we can drive that it reaches a suboptimality level of $\epsilon > 0$ in at most $K$ iterations:
\begin{equation}\label{Eq_iter_complexity_DOGT}
K=\mathcal{O} \left( \left( \kappa \left( 1+\frac{\sqrt{\rho _W}}{\left( 1-\rho _W \right) ^2} \right) +\frac{1}{1-\rho _W} \right) \log \left( \epsilon ^{-1} \right) \right),
\end{equation}
where $\kappa:=L/\mu$ denotes the condition number of the overall objective function, $\mathcal{O} \left( \cdot \right) $ hides the constants. Compared to the GT-EG method \cite{mukherjee2020decentralized}, DOGT achieves at least the same convergence rate and improves it by a factor of $\mathcal{O} \left( \kappa ^{1/3} \right)$ when $\rho_W$ is small, indicating strong network connectivity. 
\end{Rem}

\subsection{Optimal Convergence Rate}
For the ADOGT algorithm with accelerated consensus protocol \eqref{Eq_acc_consensus}, the following theorem gives the optimal convergence rate and communication complexity matching the existing lower bound in deterministic settings \cite{beznosikov2020distributed}.

\begin{Thm}\label{Thm_acc_DOGT}
Suppose Assumptions \ref{Ass_conv_conc}-\ref{Ass_graph} hold. Let the stepsize satisfy \eqref{ss_theorem}, and the number of communication steps at each round $T=\lceil \ln \left( 2 \right) /\sqrt{1-\sqrt{\rho _W}} \rceil $. Then, for a given $\rho_W$, the ADOGT algorithm achieves a linear rate of $\mathcal{O} \left( \kappa \log \left( \epsilon ^{-1} \right) \right) $, and the number of communication rounds $R$ required to reach a suboptimality level of $\epsilon >0$ is 
\begin{equation}
R=\mathcal{O} \left( \frac{\kappa}{\sqrt{1-\sqrt{\rho _W}}}\log \left( \epsilon ^{-1} \right) \right).
\end{equation}
\end{Thm}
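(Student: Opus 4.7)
The plan is to reduce Theorem \ref{Thm_acc_DOGT} to Theorem \ref{Thm_DOGT} by treating the $T$-step accelerated gossip protocol as a single communication round with an effective mixing matrix $M_T$ that enjoys a much better spectral gap. First, I would verify that $M_T$ defined by the Chebyshev-type recursion \eqref{Eq_weight_matrix_M} satisfies the same structural properties used by Theorem \ref{Thm_DOGT}: since $W$ is doubly stochastic with $W\mathbf{1}=\mathbf{1}$ and the recursion preserves this by linearity, $M_T\mathbf{1}=\mathbf{1}$ and $\mathbf{1}^\top M_T=\mathbf{1}^\top$. Hence $(I-\mathbf{J})M_T=M_T-\mathbf{J}$ and ADOGT fits the abstract template of DOGT with $W$ replaced by $M_T$, so Assumption \ref{Ass_graph} carries over once I control $\rho_{M_T}:=\|M_T-\mathbf{J}\|_2^2$.

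The core quantitative step is to show that with $T=\lceil \ln(2)/\sqrt{1-\sqrt{\rho_W}}\rceil$ the effective spectral gap becomes a universal constant, say $1-\rho_{M_T}\geq 1/2$. This is a standard Chebyshev acceleration argument: writing each eigenvalue $\lambda$ of $W$ with $|\lambda|\leq \sqrt{\rho_W}$, the induced eigenvalue of $M_t$ satisfies the scalar recursion $\mu_{t+1}=(1+\eta)\lambda\mu_t-\eta\mu_{t-1}$ with $\mu_{-1}=\mu_0=1$, whose solution is a rescaled Chebyshev polynomial. With the choice $\eta=(1-\sqrt{1-\rho_W})/(1+\sqrt{1-\rho_W})$, one obtains the bound
\begin{equation*}
\|M_T-\mathbf{J}\|_2 \leq 2\bigl(1-\sqrt{1-\sqrt{\rho_W}}\bigr)^T \leq 2\exp\bigl(-T\sqrt{1-\sqrt{\rho_W}}\bigr),
\end{equation*}
so plugging in $T=\lceil \ln(2)/\sqrt{1-\sqrt{\rho_W}}\rceil$ gives $\|M_T-\mathbf{J}\|_2\leq 1$ and squaring yields $\rho_{M_T}\leq 1$; a slightly larger $T$ (absorbed in the $\mathcal{O}(\cdot)$) pins it to any fixed constant in $(0,1)$, e.g.\ $\rho_{M_T}\leq 1/2$.

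Once $\rho_{M_T}$ is a constant, I invoke Theorem \ref{Thm_DOGT} verbatim with $W\leftarrow M_T$ and $\rho_W\leftarrow \rho_{M_T}$. The stepsize condition \eqref{ss_theorem} reduces to $\gamma=\mathcal{O}(1/L)$ since the $(1-\rho_W)^2/\sqrt{\rho_W}$ factor becomes a constant, and the contraction factor becomes $1-\min\{3\gamma\mu/4,(1-\rho_{M_T})/8\}=1-\Theta(1/\kappa)$. Therefore the iteration complexity from \eqref{Eq_iter_complexity_DOGT} collapses to $K=\mathcal{O}(\kappa\log(\epsilon^{-1}))$, which is the claimed optimal rate. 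The communication complexity then follows immediately: each outer iteration costs $T$ gossip rounds, so
\begin{equation*}
R=TK=\mathcal{O}\Bigl(\tfrac{\kappa}{\sqrt{1-\sqrt{\rho_W}}}\log(\epsilon^{-1})\Bigr).
\end{equation*}

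The main obstacle is the Chebyshev bound for $\|M_T-\mathbf{J}\|_2$; although it is classical, a clean self-contained derivation requires diagonalising $W$ (possible since $W$ is symmetric under Assumption \ref{Ass_graph}), recognising $M_T$ as a polynomial in $W$ evaluated via the three-term recursion, and identifying that polynomial with the shifted/scaled Chebyshev polynomial of the first kind on $[-\sqrt{\rho_W},\sqrt{\rho_W}]$. Everything else is bookkeeping: checking that the algebraic identities used in Lemmas \ref{Lem_z_iter_gap}--\ref{Lem_opt} only relied on double stochasticity and on the spectral-gap bound, both of which transfer to $M_T$, so no re-derivation of the Lyapunov analysis is needed.
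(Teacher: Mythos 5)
Your proposal is correct and follows essentially the same route as the paper: reduce to Theorem~\ref{Thm_DOGT} with $W$ replaced by the effective mixing matrix $M_T$, use the Chebyshev-acceleration spectral bound (the paper simply cites Proposition~3 of the accelerated-gossip reference, which states $\left\| M_T-\mathbf{J} \right\|^2\leqslant 2\left( 1-\sqrt{1-\sqrt{\rho _W}} \right)^{2T}$, so the stated $T$ already gives $\rho_M\leqslant 1/2$ without the constant-factor enlargement of $T$ you invoke), and multiply the resulting $\mathcal{O}\left( \kappa \log\left( \epsilon^{-1} \right) \right)$ iteration count by $T$ gossip steps per round.
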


\begin{proof}
The proof of ADOGT follows a similar approach to that of DOGT, with the weight matrix $W$ replaced by $M_T$. In specific, after executing $T$ steps of the accelerated consensus \eqref{Eq_acc_consensus}, we obtain an equivalent weight matrix $M_T$ generated by \eqref{Eq_weight_matrix_M}.
Then, by Proposition 3 in \cite{liu2011accelerated}, we get
\begin{equation}
\rho _M:=\left\| M_T-\mathbf{J} \right\| ^2\leqslant 2\left( 1-\sqrt{1-\sqrt{\rho _W}} \right) ^{2T}.
\end{equation}
Letting the number of communication steps at each round $T=\lceil \ln(2)/\sqrt{1-\sqrt{\rho _W}} \rceil $, we have $1-\rho _M \geqslant 1/2$. 
Then, replacing $\rho_W$ in the iteration complexity of DOGT in \eqref{Eq_iter_complexity_DOGT} with $\rho_M$ and noticing that the total number of communication steps equals the iterations multiplied by $T$, we obtain the optimal linear convergence rate and communication complexity.
\end{proof}

\begin{figure*}[!t]
    \centering
    \subfloat[Trajectory]
    {
        \begin{minipage}[t]{0.32\textwidth}
            \centering
            \includegraphics[width=\textwidth]{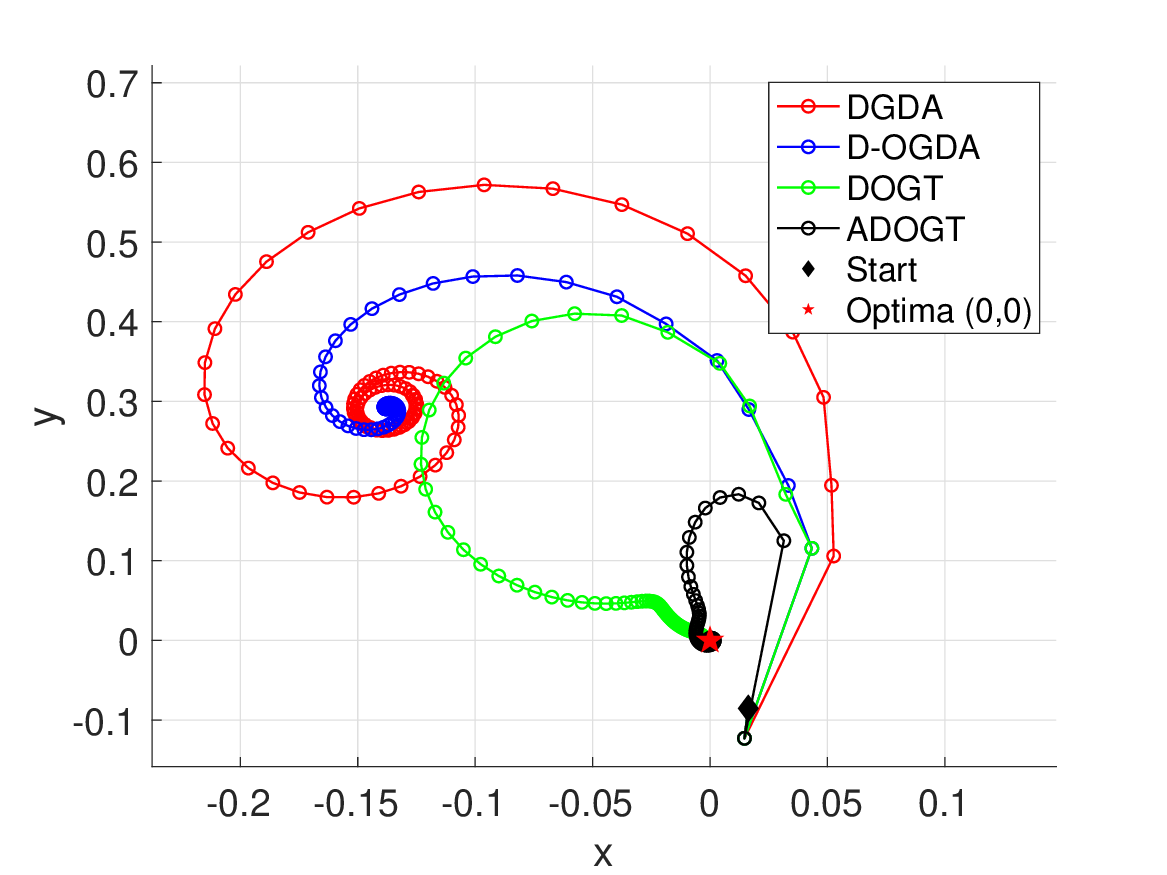}
        \end{minipage}
    }
    \subfloat[Residual]
    {
        \begin{minipage}[t]{0.32\textwidth}
            \centering 
            \includegraphics[width=\textwidth]{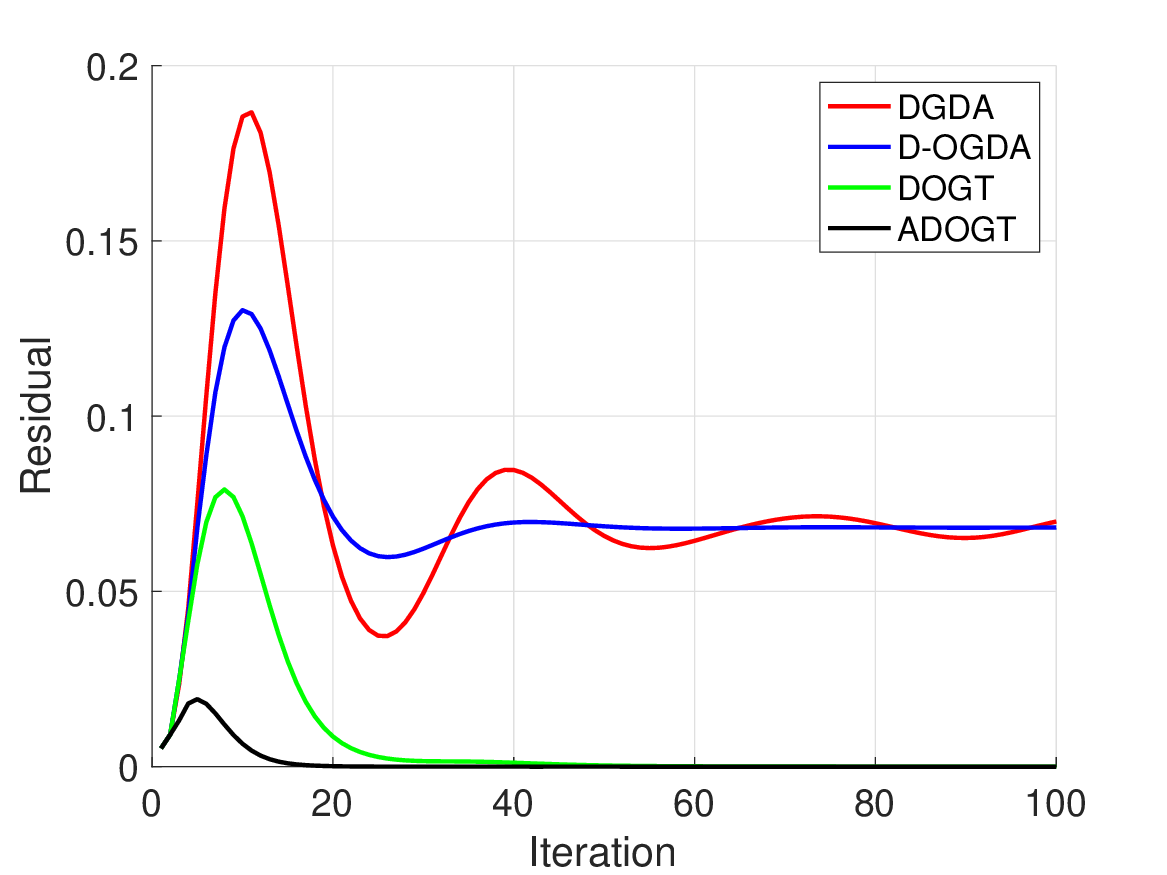} 
        \end{minipage}
    }
    \subfloat [Consensus error]
    {
        \begin{minipage}[t]{0.32\textwidth}
            \centering 
            \includegraphics[width=\textwidth]{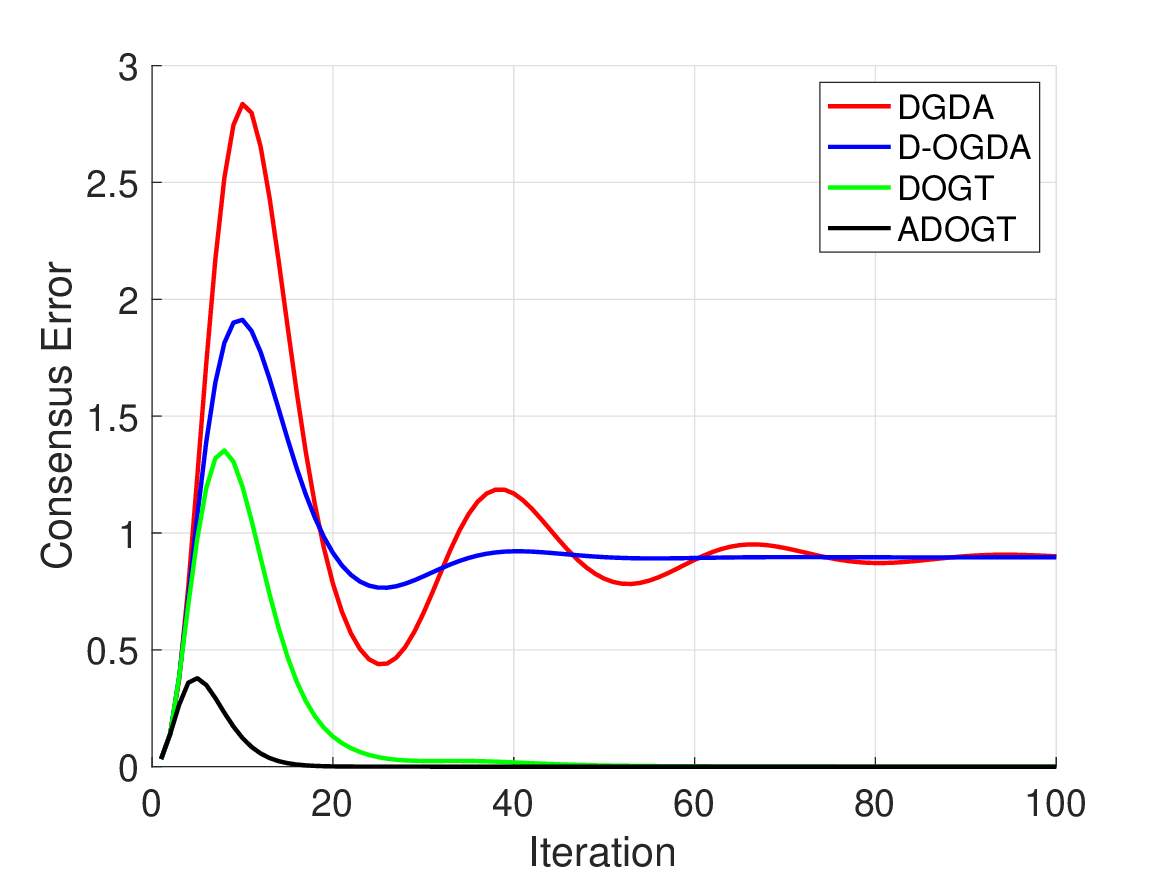} 
        \end{minipage}
    } 
    \caption{Comparison of the convergence performance between DGDA, D-OGDA, DOGT and ADOGT.}
    \label{Fig_comprison_OGT}
\end{figure*}

\section{Numerical Experiments}\label{Sec_experiment}

In this section, we conduct numerical experiments to compare the convergence performance between DGDA, D-OGDA, DOGT, and ADOGT. To this end, we consider the following synthetic example:
\begin{equation}
\begin{aligned}
f_i\left( x_i,y_i \right) =x_{i}^{T}y_i+\frac{\mu}{2}\left\| x_i-a_i \right\| ^2-\frac{\mu}{2}\left\| y_i-b_i \right\| ^2,
\end{aligned}
\end{equation}
where $i\in[n]$, $a_i\in \mathbb{R} ^p$ and $b_i\in \mathbb{R} ^d$ are different for each nodes. We consider $n=16$ nodes connected as an undirected ring graph. We set the dimensions $p=d=2$, the regularization constant $\mu=0.1$, and \hy{the stepsize $\gamma =0.1$}, respectively. For ADOGT, we set the number of communication steps at each round $T=4$ as suggested by Theorem \ref{Thm_acc_DOGT}. Moreover, we set the optimal solution $z^*=\left[ 0, 0; 0, 0 \right]$ by choosing $\sum\nolimits_{i=1}^n{a_i}=\sum\nolimits_{i=1}^n{b_i}=[0;0]$.

In Fig. \ref{Fig_comprison_OGT}, we plot the trajectory with respect to the primal and dual decision variables, the residual $1/n\left\| \mathbf{z}_k-\mathbf{1}z^* \right\| ^2$, and the consensus error $1/n\left\| \mathbf{z}_k-\mathbf{1}\bar{z} \right\|$ for each algorithm, respectively. 
The trajectories show that DOGT and ADOGT converge to the optimal solution, with ADOGT achieving faster convergence due to the integration of an accelerated consensus protocol. This demonstrates the robustness of GT-based methods against function heterogeneity. Instead, D-OGDA converges to a non-optimal point, and DGDA exhibits periodic oscillations without achieving convergence. Their non-zero residuals further support this behavior. 
Additionally, we observe that the consensus errors of DGDA and D-OGDA do not converge to zero. This is due to the heterogeneity of the objective functions, as also observed in \cite{cai2024diffusion}, which leads to inconsistent optimal points across nodes. In contrast, DOGT and ADOGT remain robust to this issue. These results demonstrate the effectiveness of the proposed algorithms.

\section{Conclusion}\label{Sec_conclusion}
In this paper, we have proposed a distributed optimistic gradient tracking method, DOGT, along with its accelerated variant, ADOGT, for solving distributed minimax optimization problems over networks. We have also provided rigorous theoretical analysis to show that DOGT achieves a linear convergence rate to the optimal solution for SC-SC objective functions. Furthermore, the proposed ADOGT integrated with accelerated consensus protocol achieves an optimal convergence rate and communication complexity that matches the existing lower bound. Numerical experiments demonstrate the effectiveness of the proposed algorithms. \hy{Future work will focus on the design and analysis of algorithms in stochastic and non-convex settings, as well as exploring their potential applications in the distributed training of GANs.}

\appendix
\section{Supporting Lemmas}
In this section, we introduce several supporting lemmas for the proof of the main results, highlighting important contraction properties of the consensus error, gradient tracking error, and optimality gap.

\begin{Lem}\label{Lem_z_iter_gap}
Suppose Assumptions \ref{Ass_smoothness} and \ref{Ass_graph} hold. Then, we have for all $ k \geqslant 0$,
\begin{equation}\label{Eq_z_ietr_gap}
\begin{aligned}
&\left\| \mathbf{z}_{k+1}-\mathbf{z}_k \right\| ^2
\\
&\leqslant 4\gamma ^2L^2\left\| \mathbf{z}_k-\mathbf{z}_{k-1} \right\| ^2+\left( 4+8\gamma ^2L^2 \right) \left\| \mathbf{z}_k-\mathbf{1}\bar{z}_k \right\| ^2
\\
 &\quad+8\gamma ^2\left\| \mathbf{r}_k-\mathbf{1}\bar{r}_k \right\| ^2+8n\gamma ^2\left\| \nabla f\left( \bar{z}_k \right) \right\| ^2.
\end{aligned}
\end{equation}
\end{Lem}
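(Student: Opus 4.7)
The plan is to start from the compact DOGT update $\mathbf{z}_{k+1}=W\mathbf{z}_k-\gamma W(\mathbf{r}_k+\nabla F_k-\nabla F_{k-1})$ and extract a clean decomposition of the one-step displacement. Using $W\mathbf{1}=\mathbf{1}$, so that $W\mathbf{1}\bar z_k=\mathbf{1}\bar z_k$, I would rewrite
\begin{equation*}
\mathbf{z}_{k+1}-\mathbf{z}_k=(W-I)(\mathbf{z}_k-\mathbf{1}\bar z_k)-\gamma W\bigl(\mathbf{r}_k+\nabla F_k-\nabla F_{k-1}\bigr),
\end{equation*}
which isolates the pure consensus mismatch in the first term (since $(W-I)\mathbf{1}=0$) and collects the tracking/correction directions in the second. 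A Young-type split of $\|\cdot\|^2$ will then be applied to these two pieces, after which the gradient-tracking correction $\mathbf{r}_k+\nabla F_k-\nabla F_{k-1}$ is itself broken into three informative sub-directions.

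Next I would bound each piece. For the consensus term, Assumption \ref{Ass_graph} gives $\|W-I\|_2\le 1+\sqrt{\rho_W}\le 2$, so $\|(W-I)(\mathbf{z}_k-\mathbf{1}\bar z_k)\|^2$ is controlled by a constant multiple of $\|\mathbf{z}_k-\mathbf{1}\bar z_k\|^2$. For the gradient piece I would first use $\|W\|_2\le1$ to drop the outer $W$, then split $\mathbf{r}_k=(\mathbf{r}_k-\mathbf{1}\bar r_k)+\mathbf{1}\bar r_k$. The first summand contributes directly to the $\|\mathbf{r}_k-\mathbf{1}\bar r_k\|^2$ term. For the second, I would exploit the gradient-tracking invariant $\bar r_k=\tfrac{\mathbf{1}^{\top}}{n}\nabla F_k$ to write
\begin{equation*}
\bar r_k=\nabla f(\bar z_k)+\frac{1}{n}\sum_{i=1}^n\bigl(\nabla f_i(z_{i,k})-\nabla f_i(\bar z_k)\bigr),
\end{equation*}
and then invoke Assumption \ref{Ass_smoothness} pointwise, yielding $\|\mathbf{1}\bar r_k\|^2=n\|\bar r_k\|^2\le 2n\|\nabla f(\bar z_k)\|^2+2L^2\|\mathbf{z}_k-\mathbf{1}\bar z_k\|^2$. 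The last direction $\nabla F_k-\nabla F_{k-1}$ is handled by the stacked smoothness estimate $\|\nabla F_k-\nabla F_{k-1}\|^2\le L^2\|\mathbf{z}_k-\mathbf{z}_{k-1}\|^2$, which is precisely what produces the $4\gamma^2 L^2\|\mathbf{z}_k-\mathbf{z}_{k-1}\|^2$ contribution.

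Collecting the four resulting contributions then yields the stated inequality, with an extra $\gamma^2 L^2\|\mathbf{z}_k-\mathbf{1}\bar z_k\|^2$ term inherited from bounding $\mathbf{1}\bar r_k$ via smoothness, which is precisely the origin of the $(4+8\gamma^2 L^2)$ coefficient in \eqref{Eq_z_ietr_gap}. The main obstacle is bookkeeping: the weights of the Young-type splits (both in separating the $(W-I)$ piece from the $\gamma W$ piece and in separating the three sub-directions inside $\mathbf{r}_k+\nabla F_k-\nabla F_{k-1}$) must be chosen so that the accumulated constants land exactly at $4$, $8\gamma^2$, $8n\gamma^2$, and $4\gamma^2 L^2$; a naive $\|a+b\|^2\le 2\|a\|^2+2\|b\|^2$ everywhere would inflate these. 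Once these weights are fixed and $\rho_W\le 1$ is used to absorb the spectral constants, the rest is straightforward algebra.
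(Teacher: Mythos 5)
Your decomposition is the same one the paper uses: write $\mathbf{z}_{k+1}-\mathbf{z}_k=(W-\mathbf{I})(\mathbf{z}_k-\mathbf{1}\bar z_k)-\gamma W(\mathbf{r}_k+\nabla F_k-\nabla F_{k-1})$, peel off $\nabla F_k-\nabla F_{k-1}$ via smoothness, split $\mathbf{r}_k$ into $(\mathbf{r}_k-\mathbf{1}\bar r_k)+\mathbf{1}\bar r_k$, and use the tracking invariant $\bar r_k=\tfrac{\mathbf{1}^{\top}}{n}\nabla F_k$ together with smoothness to compare $\bar r_k$ with $\nabla f(\bar z_k)$. Your estimate $n\|\bar r_k\|^2\le 2n\|\nabla f(\bar z_k)\|^2+2L^2\|\mathbf{z}_k-\mathbf{1}\bar z_k\|^2$ is exactly the step the paper leaves implicit, and you correctly identify it as the origin of the $(4+8\gamma^2L^2)$ coefficient.

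The gap is in how you plan to land the constants. No choice of Young weights can make the three-way split of $\mathbf{r}_k+\nabla F_k-\nabla F_{k-1}$ work: starting from the prefactor $2\gamma^2$ on $\|W(\mathbf{r}_k+\nabla F_k-\nabla F_{k-1})\|^2$, a weighted split $\|\sum_i a_i\|^2\le\sum_i w_i^{-1}\|a_i\|^2$ with $\sum_i w_i=1$ would need $w_1\ge 1/4$ (to stay below $8\gamma^2$ on $\|\mathbf{r}_k-\mathbf{1}\bar r_k\|^2$), $w_2\ge 1/2$ (to keep the prefactor on $\|\mathbf{1}\bar r_k\|^2$ at $4\gamma^2$, which after your factor-$2$ comparison with $\nabla f(\bar z_k)$ yields $8n\gamma^2$ and $8\gamma^2L^2$), and $w_3\ge 1/2$ (to get $4\gamma^2L^2$ on $\|\mathbf{z}_k-\mathbf{z}_{k-1}\|^2$); since $1/4+1/2+1/2>1$ this is infeasible. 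The missing ingredient is orthogonality, not a cleverer Young weight: because $\mathbf{1}^{\top}W=\mathbf{1}^{\top}$ and $\mathbf{1}^{\top}(\mathbf{r}_k-\mathbf{1}\bar r_k)=0$, the matrices $W(\mathbf{r}_k-\mathbf{1}\bar r_k)$ and $\mathbf{1}\bar r_k$ are orthogonal in the Frobenius inner product, so $\|W\mathbf{r}_k\|^2=\|W(\mathbf{r}_k-\mathbf{1}\bar r_k)\|^2+\|\mathbf{1}\bar r_k\|^2$ with no factor of $2$; only the two-way split between $\mathbf{r}_k$ and $\nabla F_k-\nabla F_{k-1}$ needs Young's inequality, and then the coefficients $8\gamma^2$, $8n\gamma^2$, $8\gamma^2L^2$, $4\gamma^2L^2$ come out exactly. (On the leading consensus term both you and the paper are loose: $\|W-\mathbf{I}\|_2\le 1+\sqrt{\rho_W}$ only gives $2(1+\sqrt{\rho_W})^2\le 8$ rather than the stated $4$, so that coefficient rests on the paper's unjustified claim $\|W-\mathbf{I}\|^2\le 2$ and would need a separate argument.)
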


\begin{proof}
By the updates of DOGT \eqref{Eq_DOGT} and \eqref{Eq_update_z_ave}, we get
\begin{equation*}
\begin{aligned}
&\left\| \mathbf{z}_{k+1}-\mathbf{z}_k \right\| ^2
\\
&\leqslant 2\left\| \left( W-\mathbf{I} \right) \left( \mathbf{z}_k-\mathbf{1}\bar{z}_k \right) \right\| ^2
\\
&\quad +2\gamma ^2\left\| W\left( \left( \mathbf{r}_k+\nabla F_k-\nabla F_{k-1} \right) \right) \right\| ^2
\\
&\leqslant 4\left\| \mathbf{z}_k-\mathbf{1}\bar{z}_k \right\| ^2+4\gamma ^2L^2\left\| \mathbf{z}_k-\mathbf{z}_{k-1} \right\| ^2
\\
&\quad +4\gamma ^2\left\| W\left( \mathbf{r}_k-\mathbf{1}\bar{r}_k \right) +\mathbf{1}\bar{r}_k \right\| ^2,
\end{aligned}
\end{equation*}
where we used the fact $\left\| W-\mathbf{I} \right\| ^2\leqslant 2$ and the smoothness of $f_i$.
Then, noticing that $\bar{r}_k=\frac{\mathbf{1}^{\top}}{n}\nabla F_{k+1}$, we can obtain the result in \eqref{Eq_z_ietr_gap}.
\end{proof}

The following lemma shows the contraction of the consensus error.

\begin{Lem}[Consensus error]\label{Lem_cons}
Suppose Assumptions \ref{Ass_smoothness} and \ref{Ass_graph} hold. Let the stepsize satisfy $\gamma \leqslant 1/\left( 4L \right) $, we get $\forall k \geqslant 0$,
\begin{equation}\label{Eq_consensus}
\begin{aligned}
&\left\| \mathbf{z}_{k+1}-\mathbf{1}\bar{z}_{k+1} \right\| ^2
\\
&\leqslant \frac{1+\rho _W}{2}\left\| \mathbf{z}_k-\mathbf{1}\bar{z}_k \right\| ^2+\frac{2\gamma ^2\left( 1+\rho _W \right) \rho _W}{1-\rho _W}\left\| \mathbf{r}_k-\mathbf{1}\bar{r}_k \right\| ^2
\\
&\quad+\frac{2\gamma ^2\left( 1+\rho _W \right) \rho _WL^2}{1-\rho _W}\left\| \mathbf{z}_k-\mathbf{z}_{k-1} \right\| ^2.
\end{aligned}
\end{equation}
\end{Lem}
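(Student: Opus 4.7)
The plan is to derive the desired contraction by projecting the update of $\mathbf{z}_{k+1}$ onto the orthogonal complement of $\mathbf{1}$ (via the operator $\mathbf{I}-\mathbf{J}$), exploiting the doubly stochastic property $\mathbf{J}W=W\mathbf{J}=\mathbf{J}$ so that the mixing matrix $W-\mathbf{J}$ becomes a contraction with factor $\rho_W$ on the disagreement subspace, and then carefully splitting the resulting squared norm using Young's inequality with a parameter tuned to produce exactly the coefficient $(1+\rho_W)/2$.

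First I would write the compact DOGT recursion as $\mathbf{z}_{k+1} = W\mathbf{z}_k - \gamma W\bigl(\mathbf{r}_k + \nabla F_k - \nabla F_{k-1}\bigr)$ and subtract $\mathbf{1}\bar{z}_{k+1}=\mathbf{J}\mathbf{z}_{k+1}$. Because $\mathbf{J}W=\mathbf{J}$, this yields
\begin{equation*}
\mathbf{z}_{k+1}-\mathbf{1}\bar{z}_{k+1}=(W-\mathbf{J})\bigl[(\mathbf{z}_k-\mathbf{1}\bar{z}_k)-\gamma(\mathbf{r}_k-\mathbf{1}\bar{r}_k)-\gamma(\nabla F_k-\nabla F_{k-1})\bigr],
\end{equation*}
where I have used the identity $(W-\mathbf{J})\mathbf{v}=(W-\mathbf{J})(\mathbf{v}-\mathbf{1}\bar{v})$ to rewrite the $\mathbf{z}_k$ and $\mathbf{r}_k$ contributions in mean-subtracted form (the $\nabla F_k-\nabla F_{k-1}$ term need not be mean-subtracted, since $\|W-\mathbf{J}\|_2^2=\rho_W$ controls its squared norm directly).

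Next I would apply Young's inequality $\|A+B\|^2\leqslant(1+\alpha)\|A\|^2+(1+1/\alpha)\|B\|^2$ with the choice $\alpha=(1-\rho_W)/(2\rho_W)$, so that $(1+\alpha)\rho_W=(1+\rho_W)/2$ and $1+1/\alpha=(1+\rho_W)/(1-\rho_W)$. Setting $A=(W-\mathbf{J})(\mathbf{z}_k-\mathbf{1}\bar{z}_k)$ and $B=-\gamma(W-\mathbf{J})\bigl[(\mathbf{r}_k-\mathbf{1}\bar{r}_k)+(\nabla F_k-\nabla F_{k-1})\bigr]$, and using $\|W-\mathbf{J}\|_2^2=\rho_W$, this gives
\begin{equation*}
\|\mathbf{z}_{k+1}-\mathbf{1}\bar{z}_{k+1}\|^2\leqslant \frac{1+\rho_W}{2}\|\mathbf{z}_k-\mathbf{1}\bar{z}_k\|^2+\frac{\gamma^2(1+\rho_W)\rho_W}{1-\rho_W}\bigl\|(\mathbf{r}_k-\mathbf{1}\bar{r}_k)+(\nabla F_k-\nabla F_{k-1})\bigr\|^2.
\end{equation*}

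Finally I would apply the elementary bound $\|u+v\|^2\leqslant 2\|u\|^2+2\|v\|^2$ to the cross term and invoke the $L$-smoothness of each $f_i$ (Assumption \ref{Ass_smoothness}) to conclude $\|\nabla F_k-\nabla F_{k-1}\|^2\leqslant L^2\|\mathbf{z}_k-\mathbf{z}_{k-1}\|^2$. Substituting these two bounds produces the three terms of the lemma's right-hand side with the stated constants. I do not actually expect to use the hypothesis $\gamma\leqslant 1/(4L)$ in this particular estimate, as the inequality is purely algebraic once the decomposition is in place; that assumption is presumably carried through for uniformity with the other lemmas. The only mildly delicate step is the calibration of the Young parameter $\alpha$, since a naive choice would give a contraction factor larger than $(1+\rho_W)/2$ and would propagate a worse constant through the Lyapunov argument in Theorem \ref{Thm_DOGT}.
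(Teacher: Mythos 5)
Your proposal is correct and matches the paper's own proof essentially step for step: the same decomposition $\mathbf{z}_{k+1}-\mathbf{1}\bar{z}_{k+1}=(W-\mathbf{J})\left[ (\mathbf{z}_k-\mathbf{1}\bar{z}_k)-\gamma (\mathbf{r}_k+\nabla F_k-\nabla F_{k-1}) \right]$, the same Young parameter $\lambda=(1-\rho_W)/(2\rho_W)$ yielding the factors $(1+\rho_W)/2$ and $(1+\rho_W)/(1-\rho_W)$, followed by the same $\|u+v\|^2\leqslant 2\|u\|^2+2\|v\|^2$ split and the smoothness bound on $\nabla F_k-\nabla F_{k-1}$. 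Your observation that the stepsize condition $\gamma\leqslant 1/(4L)$ is not actually needed for this purely algebraic estimate is also consistent with the paper's argument.
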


\begin{proof}
By the update rule of DOGT \eqref{Eq_DOGT} and Young's inequality, we get
\begin{equation*}
\begin{aligned}
&\left\| \mathbf{z}_{k+1}-\mathbf{1}\bar{z}_{k+1} \right\| ^2
\\
&=\left\| \left( \mathbf{I}-\mathbf{J} \right) W\left( \mathbf{z}_k-\gamma \left( \mathbf{r}_k+\nabla F_k-\nabla F_{k-1} \right) \right) \right\| ^2
\\
&\leqslant \left( 1+\lambda \right) \left\| \left( W-\mathbf{J} \right) \left( \mathbf{z}_k-\mathbf{1}\bar{z}_k \right) \right\| ^2
\\
&\quad +2\gamma ^2\left( 1+\lambda ^{-1} \right) \left\| \left( W-\mathbf{J} \right) \left( \mathbf{r}_k-\mathbf{1}\bar{r}_k \right) \right\| ^2
\\
&\quad+2\gamma ^2\left( 1+\lambda ^{-1} \right) \left\| \left( W-\mathbf{J} \right) \left( \nabla F_k-\nabla F_{k-1} \right) \right\| ^2.
\end{aligned}
\end{equation*}
Then, letting $\lambda =\frac{1-\rho _W}{2\rho _W}$, and using the smoothness of $f_i$ and Assumption \ref{Ass_graph}, we complete the proof.
\end{proof}

Next, we have the following lemma showing the contraction of the gradient tracking error.

\begin{Lem}[Gradient tracking error]\label{Lem_track}
Suppose Assumptions \ref{Ass_smoothness} and \ref{Ass_graph} hold. Let the stepsize satisfy
\begin{equation}\label{ss_tracking}
\gamma \leqslant \left\{ \frac{1}{4L}, \frac{1-\rho _W}{8L\sqrt{\rho _W}} \right\}.
\end{equation}
Then, we get for all $k \geqslant 0$,
\begin{equation}
\begin{aligned}
&\left\| \mathbf{r}_{k+1}-\mathbf{1}\bar{r}_{k+1} \right\| ^2
\\
&\leqslant \frac{3+\rho _W}{4}\left\| \mathbf{r}_k-\mathbf{1}\bar{r}_k \right\| ^2+\frac{8\gamma ^2L^4\rho _W}{1-\rho _W}\left\| \mathbf{z}_k-\mathbf{z}_{k-1} \right\| ^2
\\
&\quad +\frac{9L^2\rho _W}{1-\rho _W}\left\| \mathbf{z}_k-\mathbf{1}\bar{z}_k \right\| ^2+\frac{16n\gamma ^2L^2\rho _W}{1-\rho _W}\left\| \nabla f\left( \bar{z}_k \right) \right\| ^2.
\end{aligned}
\end{equation}
\end{Lem}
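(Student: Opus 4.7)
\textbf{Proof proposal for Lemma \ref{Lem_track}.}

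My plan is to mirror the contraction argument used for the consensus error in Lemma \ref{Lem_cons}, but starting from the gradient tracking update and then invoking Lemma \ref{Lem_z_iter_gap} to expose the four target terms on the right-hand side. From the compact update \eqref{Eq_DOGT}, one has
\begin{equation*}
\mathbf{r}_{k+1}-\mathbf{1}\bar{r}_{k+1}=(\mathbf{I}-\mathbf{J})W\bigl(\mathbf{r}_k+\nabla F_{k+1}-\nabla F_k\bigr),
\end{equation*}
and since $\mathbf{J}W=\mathbf{J}$ and $\mathbf{1}\bar{r}_k\in\mathrm{range}(\mathbf{J})$, this collapses to $(W-\mathbf{J})\bigl(\mathbf{r}_k-\mathbf{1}\bar{r}_k\bigr)+(W-\mathbf{J})\bigl(\nabla F_{k+1}-\nabla F_k\bigr)$. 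I would then apply Young's inequality with a free parameter $\lambda>0$, use $\|W-\mathbf{J}\|^2=\rho_W$, and pick $\lambda=(1-\rho_W)/(2\rho_W)$ exactly as in Lemma \ref{Lem_cons}, which gives $(1+\lambda)\rho_W=(1+\rho_W)/2$ and $(1+\lambda^{-1})\rho_W=(1+\rho_W)\rho_W/(1-\rho_W)$. That produces
\begin{equation*}
\|\mathbf{r}_{k+1}-\mathbf{1}\bar{r}_{k+1}\|^2\leqslant \tfrac{1+\rho_W}{2}\|\mathbf{r}_k-\mathbf{1}\bar{r}_k\|^2+\tfrac{(1+\rho_W)\rho_W}{1-\rho_W}\|\nabla F_{k+1}-\nabla F_k\|^2.
\end{equation*}

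Next I would use the joint $L$-smoothness from Assumption \ref{Ass_smoothness} to replace $\|\nabla F_{k+1}-\nabla F_k\|^2$ by $L^2\|\mathbf{z}_{k+1}-\mathbf{z}_k\|^2$, and then substitute the bound on $\|\mathbf{z}_{k+1}-\mathbf{z}_k\|^2$ from Lemma \ref{Lem_z_iter_gap}. This immediately yields the four target terms with multiplicative prefactors of the schematic form $\tfrac{(1+\rho_W)\rho_W}{1-\rho_W}L^2\cdot(\text{constant in }\gamma L)$. Using $1+\rho_W\leqslant 2$ throughout, together with $\gamma L\leqslant 1/4$ from \eqref{ss_tracking}, each of the coefficients in front of $\|\mathbf{z}_k-\mathbf{z}_{k-1}\|^2$, $\|\mathbf{z}_k-\mathbf{1}\bar{z}_k\|^2$, and $\|\nabla f(\bar{z}_k)\|^2$ can be loosened to match the stated constants $8\gamma^2 L^4\rho_W/(1-\rho_W)$, $9L^2\rho_W/(1-\rho_W)$, and $16n\gamma^2 L^2\rho_W/(1-\rho_W)$, respectively; these are routine one-line majorizations.

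The only step that really uses the extra part of the stepsize restriction in \eqref{ss_tracking} is absorbing the stray $\|\mathbf{r}_k-\mathbf{1}\bar{r}_k\|^2$ that Lemma \ref{Lem_z_iter_gap} injects (with coefficient $8\gamma^2$) into the leading tracking term, so as to go from $(1+\rho_W)/2$ to $(3+\rho_W)/4$. Concretely, I need
\begin{equation*}
\tfrac{(1+\rho_W)\rho_W}{1-\rho_W}\cdot 8\gamma^2 L^2 \;\leqslant\; \tfrac{3+\rho_W}{4}-\tfrac{1+\rho_W}{2}=\tfrac{1-\rho_W}{4},
\end{equation*}
which, after bounding $1+\rho_W\leqslant 2$, reduces to $\gamma^2 L^2\leqslant (1-\rho_W)^2/(64\rho_W)$, i.e.\ precisely $\gamma\leqslant (1-\rho_W)/(8L\sqrt{\rho_W})$ from \eqref{ss_tracking}. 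I expect this bookkeeping step --- choosing $\lambda$ so that the $(1+\rho_W)/2$ contraction is not destroyed when the Lemma \ref{Lem_z_iter_gap} perturbation is folded in --- to be the only delicate part of the argument; everything else is algebraic simplification under $\gamma L\leqslant 1/4$ and $1+\rho_W\leqslant 2$.
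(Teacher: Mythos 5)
Your proposal is correct and follows essentially the same route as the paper: the same decomposition $\mathbf{r}_{k+1}-\mathbf{1}\bar{r}_{k+1}=(W-\mathbf{J})(\mathbf{r}_k+\nabla F_{k+1}-\nabla F_k)$, the same Young's inequality with $\lambda=(1-\rho_W)/(2\rho_W)$, and the same substitution of Lemma \ref{Lem_z_iter_gap}. In fact you make explicit the coefficient bookkeeping that the paper compresses into ``letting the stepsize satisfy \eqref{ss_tracking}\ldots we complete the proof,'' and your identification of $\gamma\leqslant(1-\rho_W)/(8L\sqrt{\rho_W})$ as exactly what absorbs the injected $16\gamma^2L^2\rho_W/(1-\rho_W)$ multiple of $\|\mathbf{r}_k-\mathbf{1}\bar{r}_k\|^2$ into the $(3+\rho_W)/4$ contraction factor is the right accounting.
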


\begin{proof}
According to the updates of DOGT \eqref{Eq_DOGT}, we have
\begin{equation*}
\begin{aligned}
&\left\| \mathbf{r}_{k+1}-\mathbf{1}\bar{r}_{k+1} \right\| ^2
\\
&=\left\| \left( W-\mathbf{J} \right) \left( \mathbf{r}_k+\nabla F_{k+1}-\nabla F_k \right) \right\| ^2
\\
&\leqslant \left( 1+\lambda \right) \left\| \left( W-\mathbf{J} \right) \left( \mathbf{r}_k-\mathbf{1}\bar{r}_k \right) \right\| ^2
\\
&\quad+\left( 1+\lambda ^{-1} \right) \left\| \left( W-\mathbf{J} \right) \left( \nabla F_{k+1}-\nabla F_k \right) \right\| ^2
\\
&\leqslant \frac{1+\rho _W}{2}\left\| \mathbf{r}_k-\mathbf{1}\bar{r}_k \right\| ^2+\frac{\left( 1+\rho _W \right) \rho _WL^2}{1-\rho _W}\left\| \mathbf{z}_{k+1}-\mathbf{z}_k \right\| ^2,
\end{aligned}
\end{equation*}
where we used Young's inequality with $\lambda =\frac{1-\rho _W}{2\rho _W}$ and the smoothness of $f_i$. Then, 
letting the stepsize satisfy \eqref{ss_tracking} and applying Lemma \ref{Lem_z_iter_gap}, we complete the proof.
\end{proof}

We are now in a position to show the key lemma on the contraction of the optimality gap defined in \eqref{Eq_opt_gap}.

\begin{Lem}[Optimality gap]\label{Lem_opt}
Suppose Assumptions \ref{Ass_conv_conc}-\ref{Ass_graph} hold. Let the stepsize satisfy 
\begin{equation}\label{Eq_stepsize_opt}
\gamma \leqslant \left\{ \frac{1}{8L}, \frac{1-\rho _W}{8L\rho _W} \right\}.
\end{equation}
Then, we have for all $k\geqslant0$,
\begin{equation}
\begin{aligned}
&\left\| \varXi _{k+1} \right\| ^2
\\
&\leqslant \left( 1-\frac{3\gamma \mu}{4} \right) \left\| \varXi _k \right\| ^2+\frac{5\gamma ^2L^2}{4n}\left\| \mathbf{z}_k-\mathbf{z}_{k-1} \right\| ^2
\\
&\quad +\frac{4\gamma L}{n}\left\| \mathbf{z}_k-\mathbf{1}\bar{z}_k \right\| ^2+\frac{9\gamma ^3L\rho _W}{n\left( 1-\rho _W \right)}\left\| \mathbf{r}_k-\mathbf{1}\bar{r}_k \right\| ^2
\\
&\quad -\frac{\gamma ^2}{4n}\left\| \nabla F\left( \mathbf{1}\bar{z}_k \right) \right\| ^2.
\end{aligned}
\end{equation}
\end{Lem}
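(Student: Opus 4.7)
The plan hinges on the one-step recursion $\Xi_{k+1} = \Xi_k - \gamma \bar g_{k+1}$ with $\bar g_{k+1} := \frac{\mathbf 1^{\top}}{n}\nabla F_{k+1}$, which follows directly from subtracting $z^*$ from both sides of \eqref{Eq_update_z_ave}. Squaring gives
\begin{equation*}
\|\varXi_{k+1}\|^2 = \|\varXi_k\|^2 - 2\gamma\langle \varXi_k, \bar g_{k+1}\rangle + \gamma^2\|\bar g_{k+1}\|^2,
\end{equation*}
so the remainder of the argument consists of lower-bounding the cross term and absorbing the quadratic residual so that the five terms in the stated inequality materialize.

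I would handle the cross term by decomposing $\bar g_{k+1} = \nabla f(\bar z_k) + (\bar g_k - \nabla f(\bar z_k)) + (\bar g_{k+1} - \bar g_k)$. Against $\nabla f(\bar z_k)$, I would rewrite $\varXi_k = (\bar z_k - z^*) - \gamma(\bar g_k - \bar g_{k-1})$ and use strong monotonicity of the saddle-point operator to obtain $\langle \bar z_k - z^*, \nabla f(\bar z_k)\rangle \geq \mu\|\bar z_k - z^*\|^2$; a Young's inequality then converts $\mu\|\bar z_k - z^*\|^2$ into a contractive factor on $\|\varXi_k\|^2$ at the cost of a smoothness-controlled $L^2/n\cdot\|\mathbf z_k - \mathbf z_{k-1}\|^2$ term. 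Against $\bar g_k - \nabla f(\bar z_k)$, Cauchy--Schwarz combined with the Jensen-smoothness estimate $\|\bar g_k - \nabla f(\bar z_k)\|^2 \leq L^2/n\cdot\|\mathbf z_k - \mathbf 1\bar z_k\|^2$ yields the $4\gamma L/n\cdot\|\mathbf z_k - \mathbf 1\bar z_k\|^2$ contribution. Against $\bar g_{k+1} - \bar g_k$, I would use smoothness $\|\bar g_{k+1} - \bar g_k\|^2 \leq L^2/n\cdot\|\mathbf z_{k+1} - \mathbf z_k\|^2$ and then invoke Lemma \ref{Lem_z_iter_gap} to expand $\|\mathbf z_{k+1} - \mathbf z_k\|^2$ into the four ingredients $\|\mathbf z_k - \mathbf z_{k-1}\|^2$, $\|\mathbf z_k - \mathbf 1\bar z_k\|^2$, $\|\mathbf r_k - \mathbf 1\bar r_k\|^2$, and $n\|\nabla f(\bar z_k)\|^2$ appearing in the final statement.

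The residual $\gamma^2\|\bar g_{k+1}\|^2$ is split by the triangle inequality into the same three pieces: the $\|\nabla f(\bar z_k)\|^2$ part combines with the cross-term contribution to leave a net negative $-\gamma^2/(4n)\|\nabla F(\mathbf 1\bar z_k)\|^2$ once the stepsize condition $\gamma \leq 1/(8L)$ forces the positive coefficient below $\gamma^2/(4n)$; the remaining two pieces are of order $\gamma^3 L$ and can be absorbed into the listed $\gamma^2 L^2$ and $\gamma^3 L\rho_W/(1-\rho_W)$ slots. The $\rho_W/(1-\rho_W)$ factor on the gradient-tracking term is engineered by trading a would-be $\gamma^2$ coefficient for a $\gamma^3 \rho_W/(1-\rho_W)$ coefficient using the second half $\gamma \leq (1-\rho_W)/(8L\rho_W)$ of \eqref{Eq_stepsize_opt}.

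The principal obstacle I expect is the constant bookkeeping. The five target coefficients $3\gamma\mu/4$, $5\gamma^2 L^2/(4n)$, $4\gamma L/n$, $9\gamma^3 L\rho_W/(n(1-\rho_W))$ and $\gamma^2/(4n)$ are tightly prescribed, so the Young's inequality parameters in the three decompositions must be chosen compatibly so that (i) the combined $\|\varXi_k\|^2$ coefficient lands exactly at $1-3\gamma\mu/4$, (ii) the positive $\|\nabla f(\bar z_k)\|^2$ contributions coming through Lemma \ref{Lem_z_iter_gap} are dominated by the $\gamma^2/(4n)$ reservoir, and (iii) every residual $\gamma^3 L^3/n$ or $\gamma/\mu$-type artefact of the Young splits fits inside the listed coefficients under \eqref{Eq_stepsize_opt}. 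No single estimate is deep; the difficulty is making all the constants commute simultaneously.
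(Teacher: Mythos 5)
Your skeleton (expand the square of $\varXi_{k+1}=\varXi_k-\gamma\bar g_{k+1}$, control the cross term with strong monotonicity and smoothness, then invoke Lemma \ref{Lem_z_iter_gap}) matches the paper's outline, but the step that actually makes the lemma true is missing. The conclusion contains the strictly negative term $-\frac{\gamma^2}{4n}\left\| \nabla F\left( \mathbf{1}\bar{z}_k \right) \right\|^2$, and this term is not decorative: it is what Theorem \ref{Thm_DOGT} later uses to absorb the positive $\left\| \nabla f\left( \bar{z}_k \right) \right\|^2$ contributions coming out of Lemma \ref{Lem_track}. You propose to obtain it by letting ``the $\|\nabla f(\bar z_k)\|^2$ part of the residual combine with the cross-term contribution'' under $\gamma\leqslant 1/(8L)$. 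But the only way the cross term $-2\gamma\left< \varXi_k,\nabla f\left( \bar{z}_k \right) \right>$ can yield a negative multiple of $\left\| \nabla f\left( \bar{z}_k \right) \right\|^2$ is through cocoercivity of the saddle-point operator $\left[ \nabla_x f,-\nabla_y f \right]$, and that property fails for convex--concave problems (for bilinear $f$ one has $\left< z-z^*,\nabla f\left( z \right) \right> =0$ while $\left\| \nabla f\left( z \right) \right\| >0$); strong monotonicity only gives $-2\gamma\mu\left\| \bar{z}_k-z^* \right\|^2$, which you already spend on the contraction factor. Every other piece of your decomposition is bounded by Cauchy--Schwarz/Young and therefore contributes with a positive sign, so your plan produces no negative gradient-norm reservoir at all.

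The paper gets this term from the optimistic structure itself, not from a stepsize cancellation: pairing $\gamma\frac{\mathbf{1}^{\top}}{n}\nabla F_{k+1}$ with $\varXi_{k+1}$ (rather than $\varXi_k$) and using the definition \eqref{Eq_opt_gap} of $\varXi_{k+1}$ exposes the product $-2\gamma^2\left< \frac{\mathbf{1}^{\top}}{n}\nabla F_{k+1},\frac{\mathbf{1}^{\top}}{n}\nabla F_k \right>$, and the identity $a^2-2ab=\left( a-b \right)^2-b^2$ then yields the exact decomposition \eqref{Eq_Desc_1} with $+\frac{\gamma^2}{n}\left\| \nabla F_{k+1}-\nabla F_k \right\|^2-\frac{\gamma^2}{n}\left\| \nabla F_k \right\|^2$; the second term is what survives as $-\frac{\gamma^2}{4n}\left\| \nabla F\left( \mathbf{1}\bar{z}_k \right) \right\|^2$. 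A secondary difference is that the paper anchors the convexity/smoothness argument at the next iterates $z_{i,k+1}$ via function values, whereas you anchor it at $\bar z_k$ via operator monotonicity; that substitution is in principle workable, but without the completion-of-squares step your argument cannot reach the stated inequality.
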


\begin{proof}
According to \eqref{Eq_update_z_ave}, we have
\begin{equation*}
\begin{aligned}
&\left\| \varXi _{k+1} \right\| ^2
=\left\| \varXi _k-\gamma \frac{\mathbf{1}^{\top}}{n}\nabla F_{k+1} \right\| ^2
\\
&=\left\| \varXi _k \right\| ^2+\left\| \gamma \frac{\mathbf{1}^{\top}}{n}\nabla F_{k+1} \right\| ^2-2\left< \gamma \frac{\mathbf{1}^{\top}}{n}\nabla F_{k+1}, \varXi _k \right> .
\end{aligned}
\end{equation*}
Since $\varXi _{k+1}=\varXi _k-\gamma \frac{\mathbf{1}^{\top}}{n}\nabla F_{k+1}$, we have
\begin{equation*}
\begin{aligned}
&\left\| \varXi _{k+1} \right\| ^2
\\
&=\left\| \varXi _k \right\| ^2+\left\| \gamma \frac{\mathbf{1}^{\top}}{n}\nabla F_{k+1} \right\| ^2
\\
&\quad-2\left< \gamma \frac{\mathbf{1}^{\top}}{n}\nabla F_{k+1}, \varXi _{k+1}+\gamma \frac{\mathbf{1}^{\top}}{n}\nabla F_{k+1} \right> 
\\
&=\left\| \varXi _k \right\| ^2+\frac{\gamma ^2}{n}\left\| \nabla F_{k+1} \right\| ^2-2\left< \gamma \frac{\mathbf{1}^{\top}}{n}\nabla F_{k+1},\bar{z}_{k+1}-z^* \right> 
\\
&\quad-2\left< \gamma \frac{\mathbf{1}^{\top}}{n}\nabla F_{k+1},\gamma \frac{\mathbf{1}^{\top}}{n}\nabla F_k \right>.
\end{aligned}
\end{equation*}
Noticing that $a^2-2ab=\left( a-b \right) ^2-b^2, \forall a,b\in \mathbb{R} $, we get

\begin{equation}\label{Eq_Desc_1}
\begin{aligned}
\left\| \varXi _{k+1} \right\| ^2
&=\left\| \varXi _k \right\| ^2-2\left< \gamma \frac{\mathbf{1}^{\top}}{n}\nabla F_{k+1},\bar{z}_{k+1}-z^* \right> 
\\
&\quad  +\frac{\gamma ^2}{n}\left\| \nabla F_{k+1}-\nabla F_k \right\| ^2-\frac{\gamma ^2}{n}\left\| \nabla F_k \right\| ^2.
\end{aligned}
\end{equation}

For the inner product term, we have
\begin{equation*}
\begin{aligned}
&\left< \gamma \frac{\mathbf{1}^{\top}}{n}\nabla F\left( \mathbf{z}_{k+1} \right) ,\bar{z}_{k+1}-z^* \right> 
\\
&=\frac{\gamma}{n}\sum_{i=1}^n{\left< \nabla f_i\left( z_{i,k+1} \right) ,z_{i,k+1}-z^*-\left( z_{i,k+1}-\bar{z}_{k+1} \right) \right>}.
\end{aligned}
\end{equation*}
Then, by Assumptions \ref{Ass_conv_conc} and \ref{Ass_smoothness}, we have
\begin{equation*}
\begin{aligned}
&\left< \gamma \frac{\mathbf{1}^{\top}}{n}\nabla F\left( \mathbf{z}_{k+1} \right) ,\bar{z}_{k+1}-z^* \right> 
\\
&\geqslant \frac{\gamma}{n}\sum_{i=1}^n{\left( f_i\left( z_{i,k+1} \right) -f_i\left( z^* \right) +\frac{\mu}{2}\left\| z_{i,k+1}-z^* \right\| ^2 \right)}
\\
&\quad-\frac{\gamma}{n}\sum_{i=1}^n{\left( f_i\left( z_{i,k+1} \right) -f_i\left( \bar{z}_{k+1} \right) +\frac{L}{2}\left\| z_{i,k+1}-\bar{z}_{k+1} \right\| ^2 \right)}
\\
&=\gamma f\left( \left( \bar{z}_{k+1} \right) -f\left( z^* \right) \right) +\frac{\gamma \mu}{2n}\left\| \mathbf{z}_{k+1}-\mathbf{1}z^* \right\| ^2
\\
&\quad-\frac{\gamma L}{2n}\left\| \mathbf{z}_{k+1}-\mathbf{1}\bar{z}_{k+1} \right\| ^2.
\end{aligned}
\end{equation*}

Putting it back to \eqref{Eq_Desc_1} and using $f\left( \bar{z}_{k+1} \right) -f\left( z^* \right) \geqslant \frac{\mu}{2}\left\| \bar{z}_{k+1}-z^* \right\| ^2$, we get
\begin{equation*}
\begin{aligned}
\left\| \varXi _{k+1} \right\| ^2
&\leqslant \left\| \varXi _k \right\| ^2+\frac{\gamma L}{n}\left\| \mathbf{z}_{k+1}-\mathbf{1}\bar{z}_{k+1} \right\| ^2
\\
&\quad-\gamma \mu \left\| \bar{z}_{k+1}-z^* \right\| ^2-\frac{\gamma \mu}{n}\left\| \mathbf{z}_{k+1}-\mathbf{1}z^* \right\| ^2
\\
&\quad+\frac{\gamma ^2L^2}{n}\left\| \mathbf{z}_{k+1}-\mathbf{z}_k \right\| ^2-\frac{\gamma ^2}{n}\left\| \nabla F_k \right\| ^2.
\end{aligned}
\end{equation*}

Noticing that 
\begin{equation*}
\begin{aligned}
&-\frac{1}{n}\left\| \mathbf{z}_{k+1}-\mathbf{1}z^* \right\| ^2
\\
&=-\frac{1}{n}\left\| \mathbf{z}_{k+1}-\mathbf{1}\bar{z}_{k+1} \right\| ^2-\left\| \bar{z}_{k+1}-z^* \right\| ^2
\\
&\quad -\frac{2}{n}\left< \mathbf{z}_{k+1}-\mathbf{1}\bar{z}_{k+1},\mathbf{1}\bar{z}_{k+1}-\mathbf{1}z^* \right> 
\\
&\leqslant \frac{1}{n}\left\| \mathbf{z}_{k+1}-\mathbf{1}\bar{z}_{k+1} \right\| ^2-\frac{1}{2}\left\| \bar{z}_{k+1}-z^* \right\| ^2,
\end{aligned}
\end{equation*}
and
\begin{equation*}
\begin{aligned}
-\left\| \bar{z}_{k+1}-z^* \right\| ^2
&=-\left\| \varXi _k-\gamma \frac{\mathbf{1}^{\top}}{n}\nabla F\left( \mathbf{z}_k \right) \right\| ^2
\\
&\leqslant -\frac{1}{2}\left\| \varXi _k \right\| ^2+\frac{\gamma ^2}{n}\left\| \nabla F_k \right\| ^2,
\end{aligned}
\end{equation*}
we get
\begin{equation*}
\begin{aligned}
&\left\| \varXi _{k+1} \right\| ^2
\\
&\leqslant \left( 1-\frac{3\gamma \mu}{4} \right) \left\| \varXi _k \right\| ^2+\frac{\gamma \left( L+\mu \right)}{n}\left\| \mathbf{z}_{k+1}-\mathbf{1}\bar{z}_{k+1} \right\| ^2
\\
&\quad+\frac{\gamma ^2L^2}{n}\left\| \mathbf{z}_{k+1}-\mathbf{z}_k \right\| ^2-\left( \frac{\gamma ^2}{n}-\frac{3\gamma ^3\mu}{2n} \right) \left\| \nabla F_k \right\| ^2.
\end{aligned}
\end{equation*}

Then, according to the obtained inequalities \eqref{Eq_z_ietr_gap} and \eqref{Eq_consensus} and letting the stepsize satisfy \eqref{Eq_stepsize_opt}, we can obtain that
\begin{equation*}
\begin{aligned}
&\left\| \varXi _{k+1} \right\| ^2
\\
&\leqslant \left( 1-\frac{3\gamma \mu}{4} \right) \left\| \varXi _k \right\| ^2+\frac{5\gamma ^2L^2}{4n}\left\| \mathbf{z}_k-\mathbf{z}_{k-1} \right\| ^2
\\
&\quad+\left( \frac{2\gamma L}{n}+\frac{9\gamma ^2L^2}{2n} \right) \left\| \mathbf{z}_k-\mathbf{1}\bar{z}_k \right\| ^2
\\
&\quad+\frac{9\gamma ^3L\rho _W}{n\left( 1-\rho _W \right)}\left\| \mathbf{r}_k-\mathbf{1}\bar{r}_k \right\| ^2
\\
&\quad+8\gamma ^4L^2\left\| \nabla f\left( \bar{z}_k \right) \right\| ^2-\left( \frac{\gamma ^2}{n}-\frac{3\gamma ^3\mu}{2n} \right) \left\| \nabla F_k \right\| ^2.
\end{aligned}
\end{equation*}
Then, noticing that $\left\| \nabla f\left( \bar{z}_k \right) \right\| ^2\leqslant \frac{1}{n}\left\| \nabla F\left( \mathbf{1}\bar{z}_k \right) \right\| ^2$ and
$-\left\| \nabla F_k \right\| ^2\leqslant L^2\left\| \mathbf{z}_k-\mathbf{1}\bar{z}_k \right\| ^2-\frac{1}{2}\left\| \nabla F\left( \mathbf{1}\bar{z}_k \right) \right\| ^2$, we complete the proof.
\end{proof}

\bibliographystyle{ieeetr}
\bibliography{reference}
\end{document}